\theoremstyle{plain}
\newtheorem{theorem}{Theorem}[section]
\newtheorem{proposition}[theorem]{Proposition}
\newtheorem{lemma}[theorem]{Lemma}
\theoremstyle{definition}
\newtheorem{definition}[theorem]{Definition}
\newtheorem{remark}[theorem]{Remark}
\newcommand{\mbb}{\mathbb}
\newcommand{\mbf}{\mathbf}
\newcommand{\mcal}{\mathcal}
\def\namedlabel#1#2{\begingroup
    #2%
    \def\@currentlabel{#2}%
    \phantomsection\label{#1}\endgroup
}
\DeclareMathOperator{\Var}{Var}
\title{Minimax Optimal Nonsmooth Nonparametric Regression via Fractional Laplacian Eigenmaps}
\author[1]{Zhaoyang Shi}
\author[2]{Krishna Balasubramanian}
\author[2]{Wolfgang Polonik}
\affil[1]{%
    Department of Statistics\\
    Harvard University\\
    Cambridge, Massachusetts, USA
}
\affil[2]{%
    Department of Statistics\\
    University of California, Davis\\
    Davis, California, USA
}
\begin{document}
\maketitle

\begin{abstract}
  We develop minimax optimal estimators for nonparametric regression methods when the true regression function lies in an $L_2$-fractional Sobolev space with order $s\in (0,1)$. This function class is a Hilbert space lying between the space of square-integrable functions and the first-order Sobolev space consisting of differentiable functions. It contains fractional power functions, piecewise constant or piecewise polynomial functions and bump function as canonical examples. We construct an estimator based on performing Principal Component Regression using Fractional Laplacian Eigenmaps and show that the in-sample mean-squared estimation error of this estimator is of order $n^{-\frac{2s}{2s+d}}$, where $d$ is the dimension, $s$ is the order parameter and $n$ is the number of observations. We next prove a minimax lower bound of the same order, thereby establishing that no other estimator can improve upon the proposed estimator, up to context factors. We also provide preliminary empirical results validating the practical performance of the developed estimators.
\end{abstract}

\section{Introduction}
Laplacian based nonparametric regression is a widely used approach in machine learning that leverages the Laplacian Eigenmaps algorithm to perform regression tasks without relying on explicit parametric models. The nonparametric nature of the approach makes it flexible and adaptable to data generating processes without imposing strict assumptions about the functional form of the relationship between the response and the covariates. Existing theoretical studies of this approach are restricted to establishing minimax rates of convergence and adaptivity properties when the true regression function lies in Sobolev spaces; see Section~\ref{sec:litrev} for details. Such spaces are inherently smooth in nature and exclude important function classes in nonparametric statistics, such as piecewise constant or piecewise polynomial functions, bump functions and other such nonsmooth function classes. 

In this work, using the framework of fractional Laplacians, we propose a novel approach called Principal Component Regression using Fractional Laplacian Eigenmaps (PCR-FLE) for nonsmooth and nonparametric regression. The PCR-FLE algorithm generalizes the PCR-LE algorithm by~\citet{green2021minimax} and the PCR-WLE algorithm by~\citet{shi2023adaptive}, and is designed to naturally handle the case when the true regression function lies in an $L_2$-fractional Sobolev space $H^{s}(\mcal{X})$ (see Definition~\ref{def:fractionalsobolev}). Specifically, consider the following regression model, $Y_{i}=f(X_{i})+\varepsilon_{i}$, for $i=1,\ldots,n,$ where $f:\mathcal{X}\to \mathbb{R}$, $ f \in H^{s}(\mcal{X})$ for $s\in(0,1)$, $X_{i} \overset{\text{i.i.d.}}{\sim} g$, where $g$ is a density on $\mcal{X}\subset \mathbb{R}^d$, and $\varepsilon_{i}\overset{\text{i.i.d.}}{\sim} N(0,1)$ is the noise (independent of the $X_{i}$'s). The goal is to estimate the regression function $f$ given pairs of observations $(X_{1}, Y_{1}),\ldots, (X_{n}, Y_{n})$. The proposed PCR-FLE algorithm proceeds by estimating the eigenvalues and the eigenfunction of the fractional Laplacian operator (see~\eqref{eq:flap}) based on the eigenvalues and the eigenvectors of the $\epsilon$-graph constructed from the samples $\{X_i\}_{i=1}^n$, and projecting the response vector onto the top-$K$ eigenvectors. 

For this procedure, we make the following technical contributions in this work:
\begin{itemize}
    \item In Theorem \ref{mainthm}, we establish upper bounds for the in-sample mean-squared estimation error for the PCR-FLE algorithm, when the true regression function lies in $H^s({\mcal X})$, for $s\in(0,1)$ that hold with high-probability. As a part of the proof of our main results in Theorem \ref{mainthm}, we derive a concentration inequality of the discrete fractional Sobolev seminorm/energy to its continuum, which serves as an important quantity for other fractional Laplacian based machine learning algorithms.
    %\WP{delete?: fractional Sobolev spaces of order $s\in(0,1)$. Such spaces include piecewise constant, polynomial function, bump functions and other nonsmooth function classes that are of practical importance.}
    \item In Theorem \ref{thm:lowerbound}, we provide a minimax lower bound for integrated mean-squared estimation error when the truth is in $H^s({\mcal X})$, for $s\in(0,1)$, suggesting that the matching upper bounds in Theorem \ref{mainthm} are optimal. To the best of our knowledge, this is the first minimax optimality lower bounds result under random design over fractional Sobolev spaces. %\textcolor{red}{do we emphasize this minimax optimality as it seems the first under random design over sobolev space}
\end{itemize}
We provide preliminary simulations of the proposed approach validating the performance of PCR-FLE algorithm in estimating various nonsmooth functions in Section~\ref{eq:sim}. Our contributions underscore the importance of employing fractional graph Laplacians in nonsmooth nonparametric regression and lay a strong statistical groundwork for this technique.

\subsection{Literature review}\label{sec:litrev}

Graph Laplacians find extensive use in various data science applications, including feature learning and spectral clustering~\citep{weiss1999segmentation,shiNormalizedCutsImage2000a,ngSpectralClusteringAnalysis2001,von2007tutorial}. They are employed for tasks such as extracting heat kernel signatures for shape analysis~\citep{sun2009concise, andreuxAnisotropicLaplaceBeltramiOperators2015, dunson2021spectral}, reinforcement learning~\citep{mahadevan2007proto,wu2018the}, and dimensionality reduction \citep{belkin2003laplacian,coifman2006diffusion}, among others. Additional discussions can be found in works by \citet{belkin2006manifold, wang2015trend, chun2016eigenvector, hacquard2022topologically}.

Several papers in the recent past have focused on obtaining theoretical rates of convergence in the context of Laplacian operator estimation and related eigenvalue and/or eigenfunction estimation. Pointwise consistency under $\epsilon$-graphs have been studied by \citet{belkin2005towards,hein2005graphs, gine2006empirical,hein2007graph}; see references therein for more related works. Furthermore, \citet{trillos2018variational} derives consistency properties of spectral clustering methods through studying the above spectral convergence with no specific error estimates. Following them, \citet{shi2015convergence,garcia2020error,calder2022improved} derived rates of convergences of Laplacian eigenvalues and eigenvectors to population counterparts with explicit error estimates under both $\epsilon$-graphs and $k$-NN graph. Recently, \citet{hoffmann2022spectral} developed a framework for extending the above convergence results to a general Laplacian family, the weighted Laplacians and \citet{shi2023adaptive} provided additional theoretical results on the convergence of the weighted Laplacians. \citet{green2021minimaxsmoothing} considered Laplacian smoothing estimation and showed its minimax optimal rates in low-dimensional space. \citet{green2021minimax} proposed the principal components regression with the  Laplacian eigenmaps (PCR-LE) algorithm that achieves minimax optimal rates of nonparametric regression under uniform design. The PCR-LE algorithm was later generalized by \citet{shi2023adaptive} to the weighted Laplacians that include other commonly applied Laplacians such as the normalized Laplacian and the random walk Laplacian. Moreover, from a methodological perspective, \citet{rice1984bandwidth} investigated spectral series regression on Sobolev spaces, and \cite{trillos2022rates} applied the graph Poly-Laplacian smoothing to the regression problem.

The literature on both theoretical analysis and statistical applications fractional (graph) Laplacians is still in its infancy. \citet{antil2021fractional} extended the standard diffusion maps algorithm to the fractional setting that involve the use of a non-local kernel. Fractional Laplacian regularization was applied in \citet{antil2020bilevel} to study tomographic reconstruction. \citet{dunlop2020large} studied large graph limits of semi-supervised learning problems via powers of graph Laplacian, including fractional Laplacians and provided their consistency guarantee in terms of $\Gamma$-convergence. Building on this, semi-supervised learning with finite labels was explored in \citet{weihs2023consistency} via minimizing the fractional Sobolev seminorm/energy and consistency of such approach was provided through showing the $\Gamma$-convergence.

There is an extensive literature on nonparametric statistics on estimating piecewise constant or polynomial functions; see, for example,~\citet{chaudhuri1994piecewise,donoho1997cart,scott2006minimax, tibshirani2014adaptive} and references therein for a sampling of such work. While most of this work focuses on the one or two dimensional setting, \citet{chatterjee2021adaptive} recently considered the multivariate setting and established adaptive rates. Many of these contributions either consider fixed (lattice-based) designs or axis-aligned partitions. Under
appropriate boundary conditions on the shape of the partition cells (not necessarily axis-aligned), piecewise constant or polynomial functions belong to fractional Sobolev spaces. Furthermore, rates of estimation in the case of H\"{o}lder and Lipschitz functions are well-studied~\citep{gyorfi2002distribution,tsybakov2008ntroduction}. In particular, H\"{o}lder functions on bounded domains (which is typically needed in statistical estimation contexts) belong to fractional Sobolev spaces. The inclusion in the other direction is more complicated, and we refer to~\citet{rybalko2023h} for the state-of-the art results in one-dimension. Estimating functions with bounded variation, in both one and multidimensional settings is also considered in the literature; see, for example,~\citet{mammen1997locally,koenker2004penalized,sadhanala2016total,hutter2016optimal,sadhanala2017higher} and references therein for some representative works. In particular, a recent work by~\citet{hu2022voronoigram} considered the random design setting in the multivariate case and established minimax rates. More technical details regarding the relationship between function spaces of bounded variation and fractional Sobolev spaces are provided in Section~\ref{sec:relationship}.

\section{Preliminaries and Methodology}
\subsection{Laplacian matrices based on $\epsilon$-neighborhood graphs}\label{graph}
For i.i.d data $X_{1},\ldots,X_{n}$ from a distribution $G$ suported on $\mcal{X}\subseteq \mbb{R}^{d}$ with the density $g$, the $\epsilon$-neighborhood graph is defined by setting the vertex set as $\{X_{1},\ldots,X_{n}\}$ and the adjacency matrix with weights:
\begin{align}\label{def:weights}
    w_{i,j}^{\epsilon}:=\eta\left(\frac{\|X_i-X_j\|}{\epsilon}\right)\mbf{1}_{\|X_i-X_j\|\le \epsilon},\quad i,j=1,\ldots,n,
\end{align}
where $\| \cdot\|$ denotes the standard Euclidean norm. Here $\eta \ge 0$ is a non-increasing kernel function and $\epsilon$ is the bandwidth parameter. 

The adjacency matrix is then $W=(w_{i,j}^{\epsilon})_{i,j=1,\ldots,n}$ and the degree matrix $D=(d_{ij})_{i,j=1,\ldots,n}$ is then given by a diagonal matrix with the $i$-th diagonal element as $d_{i}:=\sum_{j=1}^{n}w_{i,j}^{\epsilon}$ for $i=1,\ldots,n$. The associated (unnormalized) graph Laplacian is a matrix on the $\epsilon$-graph defined as:
\begin{equation}\label{def:unnormal}
        L_{n,\epsilon}:=\frac{1}{n\epsilon^{d+2}}(D-W),
\end{equation}
where $1/(n\epsilon^{d+2})$ is a scaling factor to ensure a stable limit. For $u\in \mbb{R}^{n}$, the $i$-th coordinate of the vector $L_{n,\epsilon}u$ is given by
\begin{align}\label{def:weightedL}
    (L_{n,\epsilon}u)_{i}=\frac{1}{n\epsilon^{d+2}}\sum_{j=1}^{n}w_{i,j}^{\epsilon}\left(u_i-u_j\right).
\end{align}
It is well known that the (unnormalized) graph Laplacian \eqref{def:unnormal} is self-adjoint with respect to the Euclidean inner product $\langle \cdot,\cdot\rangle$. We denote by the scaled Euclidean inner product $\langle \cdot,\cdot\rangle_{n}:=n^{-1}\langle \cdot,\cdot\rangle$ and write its corresponding scaled norm as $\|\cdot\|_{n}$.

While we restrict our attention here to the unnormalized graph Laplacian, other forms of the graph Laplacians are also widely used in machine learning tasks; some examples include the normalized Laplacian, the random walk Laplacian and a larger family of the weighted graph Laplacians \citep{hoffmann2022spectral,shi2023adaptive} (which includes the normalized Laplacian and random walk Laplacian as special cases). It is possible to extend the procedure proposed in this paper to the class of weighted graph Laplacians, as done in \citet{shi2023adaptive}. We leave a detailed analysis of the merits of such an extension for future work. 

\subsection{Principal Component Regression via Fractional
Laplacian-Eigenmap}\label{modelandalg}
The eigenmap was first proposed in \citet{belkin2003laplacian} to deal with nonlinear dimensionality reduction and data representation. Recently, \citet{green2021minimax,shi2023adaptive} established minimax optimal rates of nonparametric regression via eigenmap on the (weighted) Laplacian. Here, we propose the following principal components regression with the fractional Laplacian eigenmaps (PCR-FLE) algorithm based on the fractional Laplacian matrix $L_{n,\epsilon}^{s}$ for $0<s<1$:
\begin{itemize}
    \item[(1)] For a given parameter $\epsilon>0$ and a kernel function $\eta$, construct the $\epsilon$-graph according to Section \ref{graph}.
    \item[(2)] Compute the fractional Laplacian matrix $L_{n,\epsilon}^{s}$ based on \eqref{def:unnormal} via its eigen-decomposition $L_{n,\epsilon}^{s}=\sum_{i=1}^{n}\lambda_{i}^{s}v_{i}v_{i}^{T}$ , where $(\lambda_i,v_i)$ are the eigenpairs with eigenvalues $0=\lambda_1 \le \ldots\le \lambda_{n}$ in an ascending order and eigenvectors normalized to satisfy $\|v_{i}\|_{n}=1$, for $i=1,\ldots,n$.
    \item[(3)] Project the response vector $Y=(Y_1,\ldots,Y_{n})^{T}$ onto the space spanned by the first $K$ eigenvectors, i.e., denote by $V_{K}\in \mbb{R}^{n\times K}$ the matrix with $j$-th column as $V_{K,j}=v_{j}$ for $j=1,\ldots,K$ and define
    \begin{align}\label{eq:estimator}
        \hat{f}:=V_{K}V_{K}^{T}Y,
    \end{align}
    as the estimator.
\end{itemize}
We remark here that the entries of the vector $\hat f$ are the in-sample values of the estimator of the regression function $f$. Intuitively speaking, PCR-FLE algorithm can be regarded as a PCR variant by substituting the sample covariance matrix with the fractional Laplacian matrix $L_{n,\epsilon}^{s}$, for $0<s<1$. The spotlight of PCR-FLE, however, lies in its capacity to learn nonsmooth functions by the fractional Laplacian compared to the sample covariance matrix or the (weighted) Laplacian. 

\subsection{Fractional Laplacian operator and fractional Sobolev spaces}\label{sec:frationalsobolev}
In this section, we introduce the function space that we consider for our analysis, the fractional Sobolev space, which includes many nonsmooth functions that are of interest in practice.

\begin{definition}\label{def:fractionalsobolev}
For any $0<s<1$, the $L_2$-fractional Sobolev space $H^{s}(\mcal{X})$ is defined as:
\begin{align*}
  %  H^{s}(\mcal{X}):=
    \Big\{u\in L^{2}(\mcal{X}):\int_{\mcal{X}\times\mcal{X}}\frac{|u(x)-u(y)|^{2}}{\|x-y\|^{d+2s}}dxdy<\infty\Big\},
\end{align*}
where $L^{2}(\mcal{X}):=\left\{u:\int_{\mcal{X}}u^{2}(x)dx<\infty \right\}$. 
\end{definition}
Consequently, the fractional Sobolev space is an intermediary space between $L^{2}(\mcal{X})$ and the first-order Sobolev space $H^{1}(\mcal{X})$ (consisting of differentiable functions) with the fractional Sobolev seminorm:
\begin{align*}
    |u|_{H^{s}(\mcal{X})}:=\left(\int_{\mcal{X}\times\mcal{X}}\frac{|u(x)-u(y)|^{2}}{\|x-y\|^{d+2s}}dxdy\right)^{\frac{1}{2}},
\end{align*}
and the fractional Sobolev norm:
\begin{align*}
%    \resizebox{.45\textwidth}{!}{$
%    \|u\|_{H^{s}(\mcal{X})}:=\left(\int_{\mcal{X}}u^{2}(x)dx+\int_{\mcal{X}\times\mcal{X}}\frac{|u(x)-u(y)|^{2}}{|x-y|^{d+2s}}dxdy\right)^{\frac{1}{2}}\\
\|u\|_{H^{s}(\mcal{X})}:=\left(\int_{\mcal{X}}u^{2}(x)dx+|u|^2_{H^{s}(\mcal{X})}\right)^{\frac{1}{2}}. 
\end{align*}
For $M>0$ and $0<s<1$, the class of all functions $u$ such that $\|u\|_{H^{s}(\mcal{X})}\le M$ is called a fractional Sobolev ball denoted by $H^{s}(\mcal{X};M)$ of radius $M$.

The above definition of the fractional Sobolev space $H^{s}(\mcal{X})$ is also linked with the following spectrally defined fractional Sobolev space:
\begin{align}\label{spectrallynorm}
    \mcal{H}^{s}(\mcal{X}):=\left\{u\in L^{2}(\mcal{X}): \sum_{i=1}^{\infty}\Lambda_{i}^{s}a_{i}^{2}<\infty\right\},
\end{align}
where $a_{i}:=\langle u,\phi_{i} \rangle$, for $i\ge 1$ and $\{(\Lambda_{i},\phi_{i})\}_{i=1}^{\infty}$ are the eigenpairs of the Laplace–Beltrami operator $\mcal{L}$ such that for $i\ge 1$:
\begin{align*}
    \mcal{L}\phi_{i}=\Lambda_i \phi_{i}\quad\text{with}\ \frac{\partial}{\partial \mbf{n}}\phi_i=0,\ \text{on}\ \partial{\mcal{X}},
\end{align*}
where $\mbf{n}$ stands for the outer normal vector and $\mcal{L}u=-\text{div}(\nabla u)$. Note also that the continuum limit of~\eqref{def:unnormal} corresponds to $\mcal{L}$ as long as $g$ is uniform. In this discussion, we stick to the case of uniform $g$ for simplicity and remark that the connection holds for a general class of densities.

It has been emphasized in \citet{dunlop2020large} that $\mcal{H}^{s}(\mcal{X})\hookrightarrow H^{s}(\mcal{X})$\footnote{$\hookrightarrow$ stands for continuous embedding.}. The above representation of the fractional Sobolev space is more related to the spectral series regression (see \citet{rice1984bandwidth,green2021minimax}) and semi-supervised learning for missing labels (see \citet{weihs2023consistency}).

Moreover, the fractional Sobolev space $H^{s}(\mcal{X})$ is naturally related to the fractional Laplacian operator $\mcal{L}^{s}$ for $0<s<1$. Readers are referred to \citet{di2012hitchhikers} for more details. Here, given a function $u$ in the Schwartz space of rapidly decaying  $C_{c}^{\infty}(\mcal{X})$\footnote{the function $u$ is compactly supported on $\mcal{X}$} functions, the fractional Laplacian is defined as %\WP{\sc The definition used in Di Nezza, is using functions in the Schwartz space - not just functions in $C_c^\infty({\cal X})$...} 
\begin{align}\label{eq:flap}
    \mcal{L}^{s}u(x)=c_{n,s}\text{P.V.}\int_{\mbb{R}^{d}}\frac{u(x)-u(y)}{\|x-y\|^{d+2s}}dy,
\end{align}
where `P.V.' stands for the Cauchy Principle Value and $c_{n,s}:=s2^{2s}\Gamma((d+2s)/2)/\Gamma(1-s)$. \citet[Proposition 3.6]{di2012hitchhikers} show the following relationship between their norms:
\begin{align*}
    |u|_{H^{s}(\mbb{R}^{d})}^{2}=2c_{n,s}^{-1}\|\mcal{L}^{s}u\|_{L^{2}(\mbb{R}^{d})}^{2}.
\end{align*}

\begin{figure*}
    \centering
    \subfigure{\includegraphics[scale=0.2]{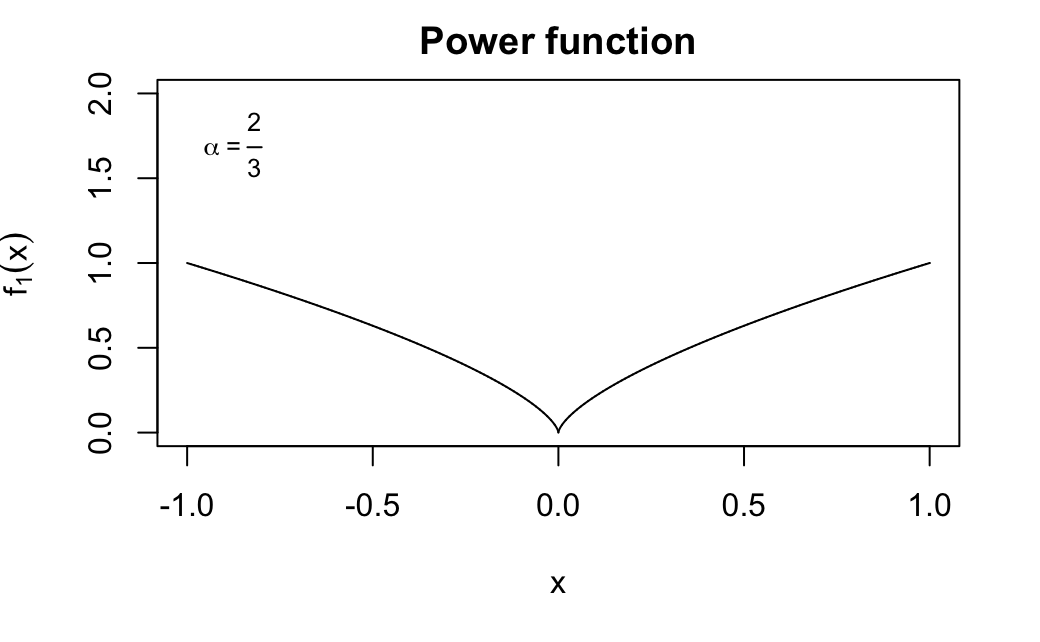}} 
    \subfigure{\includegraphics[scale=0.2]{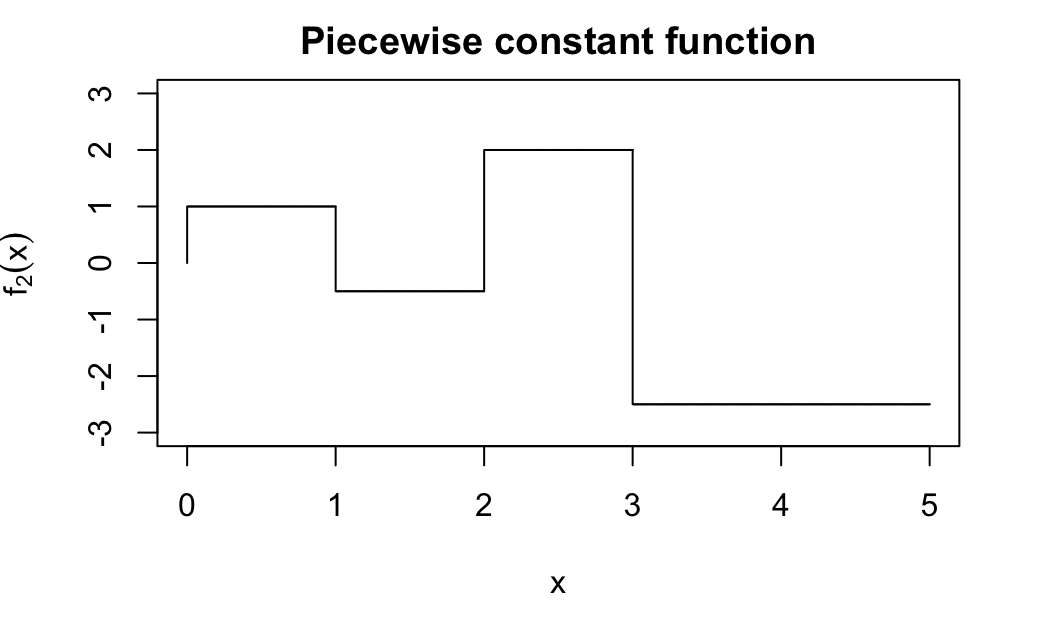}} 
    \subfigure{\includegraphics[scale=0.2]{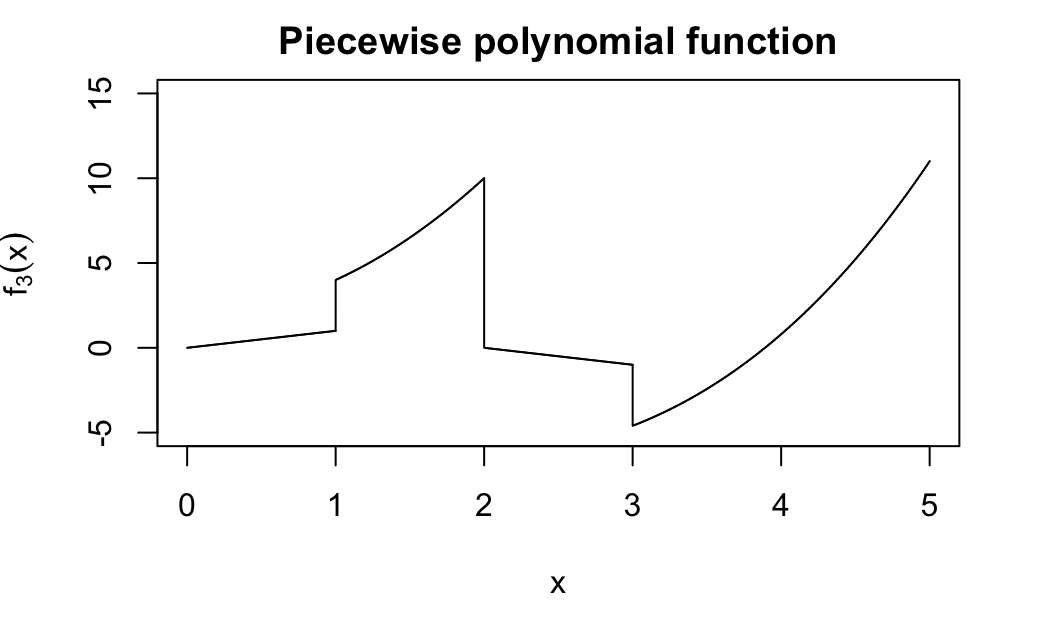}}
    \subfigure{\includegraphics[scale=0.2]{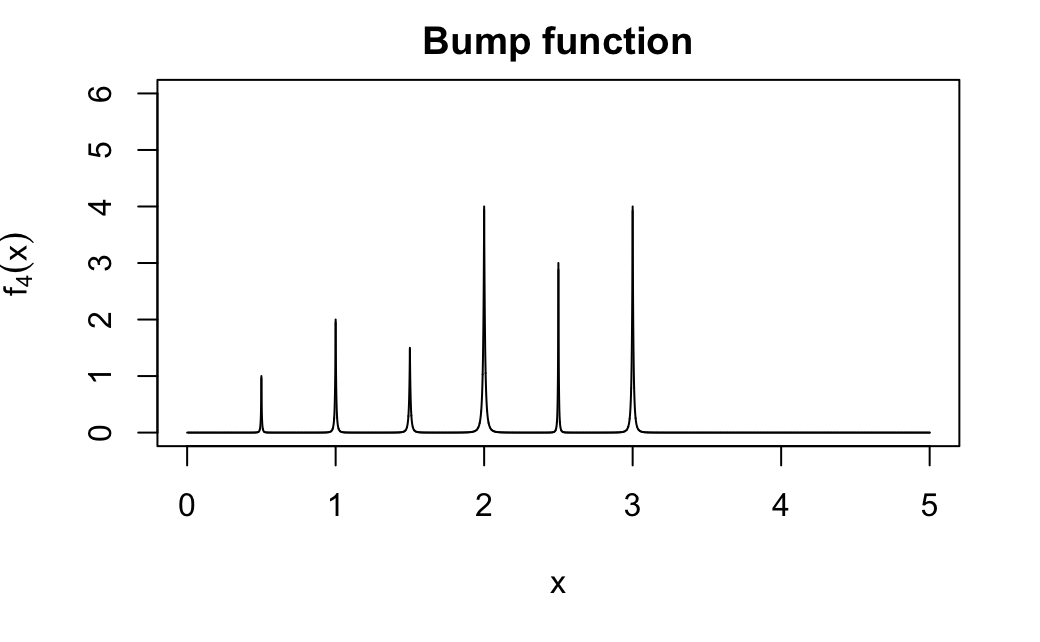}} 
    \caption{Examples of functions that lie in a Fractional Sobolev spaces. The function $f_1$, $f_2$, $f_3$ and $f_4$ are as defined in~\eqref{eq:powerfn}, \eqref{eq:pcfun},~\eqref{eq:ppfun} and~\eqref{eq:bumpfun} respectively.}
    \label{fig:foobar}
\end{figure*}

We end this section by providing some examples of \emph{nonsmooth} functions that are of interest in nonparametric statistics while not covered by the integer-indexed Sobolev space.\\

\textbf{Example 1: Power functions.} It has to be noted that the Sobolev space $H^{s}(\mcal{X})$ for $s\in\mbb{N}_{+}$ not only requires the functions to be $s$-times (weakly) differentiable but the derivatives have to be square-integrable as well. Let's consider the following function:
\begin{align}\label{eq:powerfn}
    f_1(x):=|x|^{\alpha},\quad 0<\alpha<1,
\end{align}
on $(-1,1)$. Obviously, $f_1$ is not (weakly) differentiable at $0$. Furthermore, note that $\int_{0}^{1}x^{2(\alpha-1)}dx=\infty$, for $0<\alpha\le 1/2$. Therefore, it doesn't belong to any integer-indexed Sobolev space when $0<\alpha< 1$. However, let us consider
\begin{align*}
|f_1|_{H^{s}((-1, 1))} = \left( \int_{-1}^{1} \int_{-1}^{1} \frac{||x|^{\alpha} - |y|^{\alpha}|^2 }{|x-y|^{1+2s}} \,dx\,dy \right)^{1/2},
\end{align*}
for $0<\alpha<1$ and $0<s<1$. Note that the singularity is at $x=y$. Furthermore, we have $|x|^{\alpha}-|y|^{\alpha}\sim \alpha |y|^{\alpha-1}(x-y)$ as $x\rightarrow y$ for $y\neq 0$ and $\int_{0}^{1}x^{-p}dx<\infty$ for $p<1$. Hence, the fractional Sobolev seminorm $|f_1|_{H^{\alpha}((-1, 1))}<\infty,$ for $0<s<1$ and $1/2\le \alpha<1$. In summary, the function hence belongs to the fractional Sobolev space $H^{s}((-1,1))$ for $0<s<1$ when $1/2\le \alpha<1$. \\

\textbf{Example 2. Piecewise constant functions.}
Now, consider the following function on $[0,1]$:
\begin{align*}
    f_{\text{pc}}(x)=\left\{
    \begin{aligned}
        &1,\ 0<x\le 1/2,\\
        &0,\ 1/2<x<1.
    \end{aligned}
    \right.
\end{align*}
Clearly, on the same support, $f_{\text{pc}}(x)=f_{\text{pc}}(y)$ for $0<x,y\le 1/2$ or $1/2<x,y<1$. It then suffices to only consider the following integral:
\begin{align*}
    \int_{0}^{\frac{1}{2}} \int_{\frac{1}{2}}^{1} \frac{ 1}{|x-y|^{1+2s}} \,dx\,dy,
\end{align*}
for $0<s<1$. As $\int_{0}^{1}x^{-p}dx<\infty$ for $p<1$, it is finite for $0<s<1/2$. Then, the fractional Sobolev seminorm is finite for $0<s<1/2$, which implies $f_{\text{pc}}(x)$ belongs to the fractional Sobolev space $H^{s}([0,1])$ for $0<s<1/2$.

A generalization of the above example is the piecewise constant functions or blocks (see \cite{donoho1994ideal} for more details) denoted by $f_2(x)$, where the function is constant on multiple intervals that form a partition of $\mcal{X}$. For instance,
%\begin{figure}[H]
%	\centering
%	\includegraphics[width=8cm,height=5cm]{blocks.png}
%	\caption{The blocks.}
% \label{1}
%\end{figure}
\begin{align}\label{eq:pcfun}
    f_2(x)=\left\{
    \begin{aligned}
        &1,\quad 0<x\le 1\\
        &0.5,\quad 1<x\le 2\\
        &2,\quad 2<x\le 3\\
        &-2.5,\quad 3<x< 5.\\
    \end{aligned}
    \right.
\end{align}
The piecewise constant functions/blocks belong to the fractional Sobolev space $H^{s}(\mcal{X})$ for $0<s<1/2$.\\

\textbf{Example 3. Piecewise polynomial functions.}
The piecewise polynomial functions extend the blocks above by putting a polynomial function with degree at most $r\ge 0$ on each interval partition of the support $\mcal{X}$. Furthermore, since we are considering nonsmooth functions, discontinuities at the boundaries of each interval are allowed here. According to the boundedness of the fractional Sobolev seminorms of the power functions $f_1(x)$ and the blocks $f_2(x)$, the piecewise polynomial functions belong to the fractional Sobolev space $H^{s}(\mcal{X})$ for $0<s<1/2$ and all $r\in\mbb{N}$ when $\mcal{X}$ is considered to be an open, connected and bounded subset of $\mbb{R}$ (see Section \ref{sec:assump}). For example, 
\begin{align}\label{eq:ppfun}
    f_3(x)=\left\{
    \begin{aligned}
        &x,\quad 0<x\le 1\\
        &2x^2+2,\quad 1<x\le 2\\
        &-x+2,\quad 2<x\le 3\\
        &0.2x^3-2x-4,\quad 3<x< 5.\\
    \end{aligned}
    \right.
\end{align}

%\begin{figure}[H]
	%\centering
	%\includegraphics[width=8cm,height=5cm]{piecepoly.png}
	%\caption{Piecewise polynomial functions.}
% \label{2}
%\end{figure}
In general dimension $d\ge 1$, the above arguments can be generalized to imply that the piecewise polynomial functions (including the piecewise constant functions) belong to the fractional Sobolev space when each partition of the support $\mcal{X}$ is connected and bounded and the boundary of each partition is a lower dimensional space, which allows non-axis aligned partitions.

\smallskip

\textbf{Example 4: Bumps functions} The (multiple) bumps are functions that decay fast from each peak of the bumps. In signal processing, the bumps with polynomial decay or exponential decay are commonly considered. For example,
\begin{align}\label{eq:bumpfun}
    f_{4}(x):=\sum_{j=1}^{J}h_{j}K\left(\frac{t-t_j}{w_j}\right),
\end{align}
where $K(|t|):=(1+|t|)^{-4}$, and $\{t_j,h_j,w_j\}_{j=1}^{J}$ are parameters that $t_{j}$ are the locations of each peak and $h_{j}$ are the peak values. Due to the polynomial/exponential decay, the bumps $f_4(x)$ also belong to the fractional Sobolev space $H^{s}(\mcal{X})$ for $0<s<1/2$ when $\mcal{X}$ is considered to be an open, connected and bounded subset of $\mbb{R}$ (see Section \ref{sec:assump}).

%\begin{figure}[H]
%	\centering
%	\includegraphics[width=8cm,height=5cm]{bumps.png}
%	\caption{The bumps.}
%\end{figure}

We emphasize that the the nonsmooth functions presented above (including their extensions in $\mbb{R}^{d}$) are representative of spatially variable functions arising in imaging, spectroscopy and other signal processing applications that are of considerable practical importance. We refer to \citet{donoho1994ideal,boudraa2004emd,liu2016variational,sardy2000block,sardy2001robust} for additional exposition of the examples.

\subsection{Relationship to other function classes}\label{sec:relationship}
First note that according to the definition of the fractional Sobolev space (i.e.,  Definition \ref{def:fractionalsobolev}), bounded H\"{o}lder functions of order $\alpha>0$ on bounded domains belong to the fractional Sobolev space for $0<s<(\alpha\wedge 1)$.

\citet{hu2022voronoigram} considered nonparametric estimation with general measure-based bounded total variation class, with finite $L_{\infty}$ norms. For this class, they developed minimax estimators. In particular, the total variation norm was with respect to the $L_1$ norm of the weak derivatives. Moreover, \citet{fang2021multivariate} investigated a similar class: the bounded variation in the sense of Hardy-Krause. The fractional Sobolev space that we focus on in this work are set to be a subspace of $L_{2}$ space, the corresponding norms are function-value based and do not require weak derivatives to exist. Furthermore, the class of fractional Sobolev spaces can be considered with respect any $L_{p}$ space for $p \ge 1$ (see \cite{di2012hitchhikers} for the definition). In general, both bounded variation functional space and fractional Sobolev space can characterise nonsmooth functions. When specializing in the indicator functions, the bounded variation functional space only contains such functions with locally finite perimeter for the support. The exact inclusion relationships between the spaces of bounded variation functions (with respect to the weak derivatives) and $L_1$ or $L_2$ based fractional Sobolev spaces are not well-explored in the literature, to the best of our knowledge.

\citet{rockova2021ideal} studied Bayesian estimators when the truth lies in the set of locally H\"{o}lder functions with finite $L_{\infty}$ norm. When considering the bounded support $\mcal{X}$, locally H\"{o}lder functions belong to the fractional Sobolev space. However, in general, the H\"{o}lder functions include functions that may not even be in $L_{1}$ or $L_{2}$. \citet{imaizumi2019deep} applied deep neural networks to learn a class of nonsmooth functions that are piecewise H\"{o}lder. Similar to locally H\"{o}lder functions, when considering the bounded support $\mcal{X}$, bounded piecewise H\"{o}lder functions belong to the fractional Sobolev space while in general, the former one allows functions not necessarily in $L_{1}$ or $L_{2}$.

Intuitively speaking, when characterising nonsmooth functions, compared to the aforementioned functional spaces, the fractional Sobolev space tends to allow `worse' local non-smoothness while requiring `better' global smoothness (in $L_{1}$ or $L_{2}$).

\section{Theoretical Results}
Before stating our assumptions and results, we introduce some conventions. For two real-valued quantities, $A,B$, the notation $A \lesssim B$ means that there exists a constant $C > 0$ not depending on $f$, $M$ or $n$ such that $A\le CB$ and $A\asymp B$ stands for $A\lesssim B$ and $B\lesssim A$. Also, applying the scaled Euclidean norm $\|\cdot\|_n$ or the corresponding scaled dot-product $\langle\cdot,\cdot\rangle_n$ to a function $f$, is to be understood as applying it to the vector in-sample evaluations $(f(X_1),\ldots,f(X_n))$ of the function.

\subsection{Assumptions}\label{sec:assump}
We list the following major assumptions needed for the sampling distribution/density and the kernel $\eta$. 

\begin{itemize}
    \item[\namedlabel{a1}{(A1)}] 
    The distribution $G$ is supported on $\mcal{X}$, which is an open, connected, and bounded subset of $\mbb{R}^{d}$ with Lipschitz boundary.
    \item[\namedlabel{a2}{(A2)}] 
    The distribution $G$ has a density $g$ on $\mcal{X}$ such that
    \begin{align*}
        0<g_{\min}\le g(x)\le g_{\max}<\infty,\ \text{for all}\ x\in \mcal{X},
    \end{align*}
    for some $g_{min},g_{\max}>0$. Additionally, $g$ is Lipschitz on $\mcal{X}$ with Lipschitz constant $L_{g}>0$.
    \item[\namedlabel{a3}{(A3)}] 
    The kernel $\eta$ is a non-negative, monotonically non-decreasing function supported on the interval $[0,1]$ and its restriction on $[0,1]$ is Lipschitz and for convenience, we assume $\eta(1/2)>0$ and define
    \begin{align*}
    \sigma_{0}:=\int_{\mbb{R}^{m}}\eta(\|x\|)dx,\ \sigma_{1}:=\frac{1}{d}\int_{\mbb{R}^{m}}\|y\|^{2}\eta(\|y\|)dy.
    \end{align*}
    Without loss of generality, we will assume $\sigma_0=1$ from now on.
\end{itemize}

Assumptions $(A1)$ and $(A2)$ are mild and standard assumptions on the density function in the field of graph Laplacians, which are also made in~\citet{green2021minimax,shi2023adaptive,garcia2020error}. Assumption $(A3)$ is a standard normalization condition made on the smoothing kernel; see \citet{garcia2020error} for more details. The requirement that $\eta$ is compactly supported is purely due to our proof technique. While it is in principle possible to generalize it for non-compact kernels as long as the tails decay relatively fast including the Gaussian kernel, that would require obtaining error bounds on extra terms on the tail, which is beyond the scope of this paper.

\iffalse
\begin{figure*}[t]
\centering
	\subfigure{\includegraphics[scale=0.5]{K.png}}
 \subfigure{\includegraphics[scale=0.5]{eps.png}}
% \subfigure{\includegraphics[scale=0.5]{K_100.png}}
% \subfigure{\includegraphics[scale=0.5]{epsion_100.png}}
	\caption{In-sample mean squared error of PCR-FLE for $n = 100$ as a function of the tuning parameters for piecewise polynomial function: the number of eigenvectors $K$ (left) and the graph radius $\epsilon$ (right). A similar plot is provided in Figure~\ref{4a} (in Section~\ref{sec:addexp}) for piecewise constant functions. }
 \label{4}
\end{figure*}
\fi
\subsection{Estimation error of PCR-FLE algorithm}

\begin{theorem}\label{mainthm}
    Let Assumptions $(A1)$-$(A3)$ hold, and further assume $f\in H^{s}(\mcal{X};M)$ for $0<s<1$ and $M>0$. Suppose there exist constants $c_0,C_0>0$ such that 
    \begin{align*}
         c_0\left(\frac{\log n}{n}\right)^{\frac{1}{d}}&\le \epsilon \le C_0  K^{-\frac{1}{d}},
    \end{align*}
    with 
    \begin{align}\label{conK}
        K=\min\left\{\lfloor (M^{2}n)^{\frac{d}{2s+d}}\rfloor \vee 1,n\right\}.
    \end{align}
    Then, there exist constants $c,C>0$ not depending on $f,M$ or $n$ such that for $n$ large enough, the estimator $\hat f$ defined in~\eqref{eq:estimator} satisfies:
    \begin{align*}
        \|\hat{f}-f\|_{n}^{2}\le C\big\{\big(M^2(M^2n)^{-\frac{2s}{2s+d}}\wedge 1\big) \vee {n^{-1}}\big\},
    \end{align*}
with probability at least $1-Cn^2e^{-cn\epsilon^{d+4}}-Cne^{-cn}-Cne^{-cn\epsilon^{d}}-e^{-K}$.
\end{theorem}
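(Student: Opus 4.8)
The plan is to run a bias--variance decomposition of the spectral projection estimator and then bound the two resulting pieces by an independent Gaussian (chi-squared) argument for the stochastic term and a fractional-energy/eigenvalue estimate for the approximation term. Write $f=(f(X_1),\dots,f(X_n))$ for the in-sample evaluation vector, and let $P_K$ denote the $\langle\cdot,\cdot\rangle_n$-orthogonal projection onto $\mathrm{span}(v_1,\dots,v_K)$, so that $\hat f=P_K Y=P_K(f+\varepsilon)$. Then $\hat f-f=-(I-P_K)f+P_K\varepsilon$, and since $(I-P_K)f$ and $P_K\varepsilon$ are orthogonal with respect to $\langle\cdot,\cdot\rangle_n$, we get the clean split $\|\hat f-f\|_n^2=\|(I-P_K)f\|_n^2+\|P_K\varepsilon\|_n^2$.

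I would dispatch the variance term first. Because $v_1,\dots,v_K$ are measurable functions of the design $X_1,\dots,X_n$ and hence independent of the Gaussian noise $\varepsilon$, conditioning on the design makes $\|P_K\varepsilon\|_n^2=n^{-1}\|P_K\varepsilon\|^2$ equal in distribution to $n^{-1}$ times a $\chi^2_K$ variable. A standard chi-squared deviation bound then yields $\|P_K\varepsilon\|_n^2\lesssim K/n$ on an event of probability at least $1-e^{-K}$, which is exactly the source of the $e^{-K}$ term. Substituting $K\asymp(M^2n)^{d/(2s+d)}$ gives $K/n\asymp M^2(M^2n)^{-2s/(2s+d)}$, matching the target rate, while the floor $K\ge 1$ produces the $\vee\,n^{-1}$ term.

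The heart of the argument is the bias term. Using that each $v_j$ diagonalizes $L_{n,\epsilon}^{s}$ with eigenvalue $\lambda_j^{s}$ and that $\lambda_j^{s}$ is nondecreasing in $j$, I would write
\[
\|(I-P_K)f\|_n^2=\sum_{j>K}\langle f,v_j\rangle_n^2\le\frac{1}{\lambda_{K+1}^{s}}\sum_{j>K}\lambda_j^{s}\langle f,v_j\rangle_n^2\le\frac{\langle f,L_{n,\epsilon}^{s}f\rangle_n}{\lambda_{K+1}^{s}}.
\]
Two ingredients then close the estimate. The numerator is the \emph{discrete fractional Sobolev energy}, which I would control by the concentration inequality announced as our second contribution: with high probability $\langle f,L_{n,\epsilon}^{s}f\rangle_n\lesssim|f|_{H^{s}(\mcal{X})}^2\le M^2$. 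The denominator requires the lower bound $\lambda_{K+1}^{s}\gtrsim K^{2s/d}$, which I would obtain by combining Weyl's law for the Neumann Laplace--Beltrami eigenvalues, $\Lambda_k\asymp k^{2/d}$, with quantitative discrete-to-continuum eigenvalue convergence valid precisely in the regime $c_0(\log n/n)^{1/d}\le\epsilon\le C_0 K^{-1/d}$, and then raising to the $s$-th power. Together these give bias $\lesssim M^2K^{-2s/d}$, and inserting the prescribed $K$ reproduces the claimed rate; the eigenvalue- and energy-concentration failure probabilities supply the residual $n^2e^{-cn\epsilon^{d+4}}$, $ne^{-cn}$ and $ne^{-cn\epsilon^{d}}$ terms, while the cap $K\le n$ yields the $\wedge\,1$ truncation in the regime where $M^2n$ is large.

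The main obstacle is establishing the energy concentration $\langle f,L_{n,\epsilon}^{s}f\rangle_n\lesssim|f|_{H^{s}}^2$ for a genuinely nonsmooth target $f\in H^{s}(\mcal{X})$. Unlike the Dirichlet energy at $s=1$, the quadratic form of the matrix power $L_{n,\epsilon}^{s}$ has no local double-sum representation, so it cannot be treated by a direct pointwise/Bernstein concentration; instead I expect to pass through the spectral representation, matching $\sum_j\lambda_j^{s}\langle f,v_j\rangle_n^2$ against the continuum $\sum_i\Lambda_i^{s}\langle f,\phi_i\rangle^2$ via eigenpair convergence up to a truncation level, controlling the high-frequency tail separately, and then invoking the norm equivalence between the spectrally defined space $\mcal{H}^{s}(\mcal{X})$ and $H^{s}(\mcal{X})$ to arrive at the given seminorm bound. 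Making this quantitative and uniform over nonsmooth $f$ --- in particular taming the tail where eigenvalue convergence degrades --- is the delicate step and is where the bulk of the technical work, together with the $\epsilon$-dependent probability terms, will reside.
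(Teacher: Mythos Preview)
Your proposal is correct and mirrors the paper's proof almost exactly: the same bias--variance split, the same chi-squared bound for the variance, the same ratio $\langle f,L_{n,\epsilon}^{s}f\rangle_{n}/\lambda_{K+1}^{s}$ for the bias with Weyl's law plus discrete-to-continuum eigenvalue comparison for the denominator, and the same spectral strategy---replacing $v_{i}$ by transported continuum eigenfunctions $\phi_{i}$ via quantitative eigenpair convergence---to bound the discrete fractional energy by $CM^{2}$. The only cosmetic difference is that the paper uses a factor-of-two Cauchy--Schwarz split in place of your exact orthogonality decomposition.
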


\iffalse
\begin{Theorem}[Pinsker’s theorem for the fractional Sobolev space]\label{lowerthm}
    Assume $f\in H^{s}(\mcal{X};M)$ for $0<s<1$ and $M>0$. Then, we have:
    \begin{align*}
        \lim_{n\rightarrow\infty}n^{\frac{2s}{2s+d}}\ \underset{\hat{f}}{\inf}\underset{f\in H^{s}(\mcal{X};M)}{\sup}\mbb{E}\|\hat{f}-f\|_{n}^{2}:=H_{s,M,d}<\infty,
    \end{align*}
    where $H_{s,M,d}$ is a finite positive constant depending on $s,M$ and $d$.
\end{Theorem}
\fi

\begin{remark}
    Theorems \ref{mainthm}
    implies that the PCR-FLE algorithm achieves an upper bound of rates $n^{-2s/(2s+d)}$ with respect to the fractional Sobolev spaces $H^{s}(\mcal{X})$ for $0<s<1$ with high probability, provided that $n^{-1/2}\lesssim M\lesssim n^{s/d}$. Recall that the minimax optimal rates for the integer-valued Sobolev space $H^{s}(\mcal{X})$ ($s\in\mbb{N}_{+}$) is given by $M^{2}(M^{2}n)^{-\frac{2s}{2s+d}}$ in \citet{gyorfi2002distribution,wasserman2006all,tsybakov2008ntroduction}. While allowing $s\rightarrow 1^{-}$, it is consistent with the above rates for the first-order Sobolev space $H^{1}(\mcal{X})$.
\end{remark}

\begin{remark}
Under a fixed-design setup (i.e., a regular lattice/grid), \citet{chatterjee2021adaptive} considered optimal regression tree (ORT) and showed that the finite sample risk of ORT is always bounded by $\frac{C(r)k\log N}{N}$ for some constant $C(r)>0$ and $N=c^{d}$ for some grid size $c>0$ when the regression function is piecewise polynomial of degree $r$ on some reasonably regular axis-aligned rectangular partition of the domain with at most $k$ rectangles. While such piecewise polynomial regression function belongs to the fractional Sobolev space, our bound in Theorem \ref{mainthm} is valid for a larger family of nonsmooth functions and allows random design set-up.
\end{remark}

\begin{remark}
    A phase transition in the fractional Sobolev space $H^{s}(\mcal{X})$ was discussed in \citet[Lemma 4]{dunlop2020large} that the regularity of the fractional Sobolev space depends on $s<d/2$ or $s>d/2$ (when $s<d/2$, $H^{s}(\mcal{X})$ cannot even embed continuously into the space of continuous functions $C^{0}(\mcal{X})$). However, it should be noted that Theorem \ref{mainthm} does not require the condition $s>d/2$ regardless of the phase transition.
\end{remark}

\begin{remark}\label{resoneps}
    The lower bound for $\epsilon$ makes sure that with this smallest radius, the resulting graph will still be connected with high probability and the upper bound for $\epsilon$ ensures the eigenvalue of the graph Laplacian to be of the same order as its  continuum version, the eigenvalue of the Laplacian operator (Weyl's law). The condition on $K$ is set to trade-off bias and variance. 
\end{remark}

\begin{remark}
    For computing the eigen-decomposition, we can leverage efficient sparse eigen-decomposition algorithms (e.g., Lanczos or randomized SVD) that scale nearly linearly in $n$ for sparse graphs. This is so, because we only require computing the top-$K$ eigenvectors of the graph Laplacian, where $k\ll n$ with $K=O(n^{\frac{d}{2s+d}})$, where $d/(2s+d)$ does not explode in higher dimensions, and the $\epsilon$-neighborhood graph constructed is sparse by design. Moreover, in the context of other similar problems like graph-based semi-supervised learning, conjugate-gradient based methods have been proven useful in obtaining speedups for large but sparse graphs. See \cite{sharma2023efficiently} for details.
\end{remark}

\begin{remark}
    \citet{antil2020bilevel} considered nonparametric regression via fractional Laplacian regularization. However, no convergence rates of any kind were investigated there. On the other hand, it has been discussed and emphasized in \citet{green2021minimaxsmoothing,green2021minimax} that Laplacian regularization usually achieves worse minimax rates of convergence especially in high-dimensional space $\mbb{R}^{d}$ compared to Laplacian eigenmaps. 
\end{remark}

\begin{remark}
    Although our current theoretical analysis assumes homoscedastic Gaussian noise, the PCR-FLE algorithm could be extended to heteroscedastic noise models with minimal changes to the proof. This would require replacing the standard chi-squared concentration with concentration inequalities for quadratic forms with non-constant variance (e.g., using Bernstein-type inequalities). We expect the upper bound to remain of the same order under mild regularity assumptions on the noise variance function.
\end{remark}

\subsection{Lower bound and minimax optimality}
For an estimator $\hat{f}_n$, define its integrated mean-squared estimation error as $\mbb{E}\|\hat{f}_n-f\|^2:=\int_{\mcal{X}}(\hat{f}_n(x)-f(x))^2g(x)dx$. The following theorem establishes a minimax lower bound in the integrated mean-squred estimation error for estimating functions in $H^s(\mathcal{X},M)$.

\begin{theorem}\label{thm:lowerbound}
    Suppose $f\in H^s(\mcal{X};M)$ for $0<s<1$ and the density $g$ is uniform on $\mcal{X}$. Then, there exists a constant $C_1>0$ independent of $M,n$ such that $n^{-\frac{2s}{2s+d}}$ is a lower minimax rate of convergence. In particular,
    \begin{align*}
        \underset{n\rightarrow \infty}{\lim\inf} \inf_{\hat{f}_n}\sup_{f\in H^s(\mcal{X},M)}~\frac{\mbb{E}\|\hat{f}_n-f\|^2}{M^{\frac{2d}{2s+d}}n^{-\frac{2s}{2s+d}}}\ge C_1>0.
    \end{align*}
\end{theorem}

The above result allows random design set-up compared to the existing works such as \citet{chatterjee2021adaptive}. The proof involves generalizing the arguments in \citet[Proof of Theorem 3.2]{gyorfi2002distribution} to handle the non-smoothness in fractional Sobolev spaces.

\begin{remark}
    Combing with Theorem \ref{mainthm}, Theorem \ref{thm:lowerbound} etablishes the minimax optimality of the proposed PCR-FLE algorithm. That is, no other estimator can perform better than the PCR-FLE method, up to constant factors. 
\end{remark}

\section{Numerical Experiments}\label{eq:sim}

In this section, we empirically demonstrate the performance of the PCR-FLE algorithm in Section \ref{modelandalg} for learning nonsmooth regression functions. Particularly, in our experiments, we stick to considering those functions that are of practical importance as introduced in Section \ref{sec:frationalsobolev}. For simplicity, we set the design distribution $G$ as the uniform distribution and examine the piecewise polynomial (including piecewise constant/the blocks) functions as the true regression function. For the construction of graph Laplacian, we pick a truncated Gaussian kernel. Unless otherwise stated, all tuning parameters are set as the optimal values according to grid search and each experiment is averaged over 200 repetitions. 

\begin{figure*}[t]
	 \centering
	\subfigure{\includegraphics[scale=0.5]{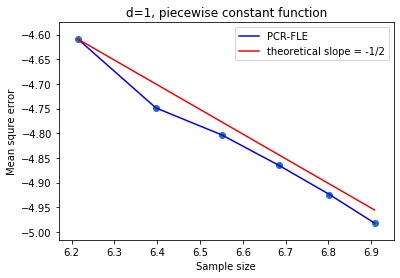}}
 \subfigure{\includegraphics[scale=0.5]{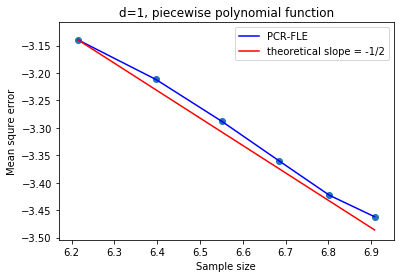}}
	\caption{In-sample mean squared error of PCR-FLE as a function of the sample size $n$. Each subplot is on the $\log$-$\log$ scale. The blue line presents the empirical error by PCR-FLE. The red line shows the theoretical upper bound provided by Theorem \ref{mainthm} (in slope only and the intercept is set to match the observed error).}
 \label{3}
\end{figure*}

\begin{figure*}[t]
	 \centering
    \subfigure{\includegraphics[scale=0.5]{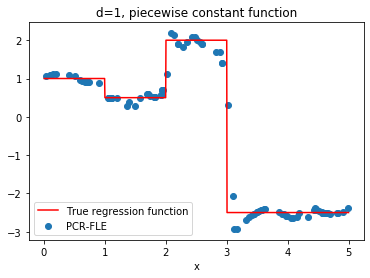}}
 %\subfigure{\includegraphics[scale=0.5]{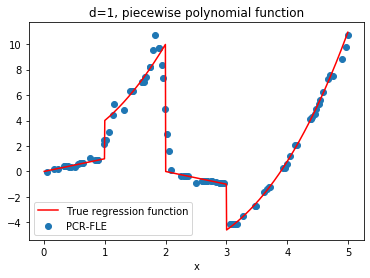}}
    \subfigure{\includegraphics[scale=0.5]{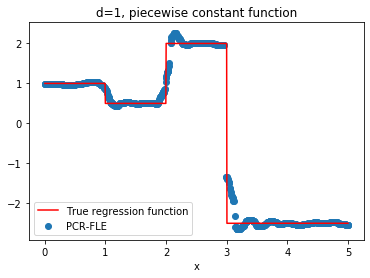}}
 %   \subfigure{\includegraphics[scale=0.5]{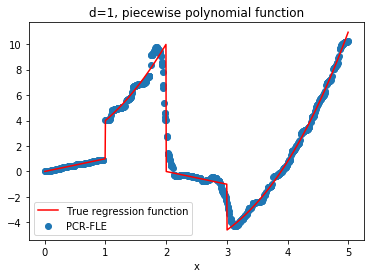}}
	\caption{The red line shows the true regression function. The blue line shows the average of the regression fit by PCR-FLE estimation condition on a generation of uniform sample on $[0,5]$: $n=100$ (top) and $n=1000$ (bottom). See Figure \ref{2a} (in Section~\ref{sec:addexp}) for a similar visualization for piecewise polynomial functions.}
 \label{2}
\end{figure*}

\iffalse
\textbf{Tuning parameters.} PCR-FLE depends on two tuning parameters: the number of eigenvectors $K$ and the graph radius $\epsilon$. We start by investigating the influence of both tuning parameters via the mean squared error. In Figure \ref{4}, we see how the in-sample mean squared error of PCR-LE changes as each tuning parameter $K$ or $\epsilon$ is varied. As suggested by our Theorem \ref{mainthm}, properly choosing the number of eigenvectors $K$ is very crucial to PCR-FLE: the curve has a sharply defined minimum. For relatively small or large number of eigenvectors $K$ chosen, PCR-FLE results in higher errors. On the other hand, as a function of the graph radius parameter $\epsilon$, the curve is flat compared to that of the number of eigenvectors $K$ on the restriction of $\epsilon$ (see Remark \ref{resoneps}). Together, this emphasizes that the number of eigenvectors $K$ should be much more carefully tuned than the graph radius $\epsilon$. We follow this strategy to tune in the rest of our experiments (unless otherwise stated). 
\fi

\textbf{Estimation.} We now consider the mean squared error of the PCR-FLE estimation on the nonsmooth functions: the piecewise constant function and the piecewise polynomial function ($f_2(x)$ and $f_{3}(x)$ respectively in Figure \ref{fig:foobar}). Due to relatively rapid rate of convergence, the sample size $n$ is set to vary from 500 to 1000. 

In Figure \ref{3} ($\log$-$\log$ scale), we show the in-sample mean squared errors of both estimators as a function of the sample size $n$. We see that both estimators have mean squared error converging to 0 roughly at our theoretical rate in Theorem \ref{mainthm} while this provides a high probability upper bound. In Figure \ref{2} and Figure \ref{2a} (in Section~\ref{sec:addexp}), we present the fitted regression function by PCR-FLE, visually. 

\section{Discussion}
We proposed and analyzed the PCR-FLE algorithm for performing nonparametric regression when the true function lies in the $L_2$-fractional Sobolev space, $H^s(\mathcal{X},M)$. The approach is computational efficient and it involves computing the top-$K$ eigenvalues and eigenvectors of  size $n\times n$ graph Laplacian matrix. Under a random design setting, we established minimax rates of convergence of order $n^{-\frac{2s}{2s+d}}$, where $n$ is the number of observations. There are several avenues for future works:
%\vspace{-0.5in}
\begin{itemize}[noitemsep]
    %\item It is important to establish minimax lower bounds for estimation when the truth lies in fractional Sobolev spaces, under the random design setting, to evaluate worst-case performance of the PCR-FLE algorithm. 
    \item Our current results require knowledge of $s$ and $M$ in setting the bandwidth parameter $\epsilon$ and the number of eigenvalues $K$. It is interesting to develop estimators that are adaptive to the choice of $s$ and $M$, by extending the recent results in~\citet{shi2023adaptive}.
    \item It is interesting to go beyond $L_2$-fractional Sobolev spaces and consider $L_1$-fractional Sobolev spaces which allow for richer class of nonsmooth true functions.
\end{itemize} 

%\section*{Acknowledgements}
%We gratefully acknowledge support for this project from the National Science Foundation via grant NSF-DMS-2053918.

\iffalse
\begin{figure}
	 \centering
    \subfigure{\includegraphics[scale=0.5]{constfit_100.png}}
 %\subfigure{\includegraphics[scale=0.5]{polyfit_100.png}}
    \subfigure{\includegraphics[scale=0.5]{piecewiseconstant_fitnew.png}}
 %   \subfigure{\includegraphics[scale=0.5]{piecewisepoly_fitnew.png}}
	\caption{The red line shows the true regression function. The blue line shows the average of the regression fit by PCR-FLE estimation condition on a generation of uniform sample on $[0,5]$: $n=100$ (top) and $n=1000$ (bottom). See Figure \ref{2a} (in Section~\ref{sec:addexp}) for a similar visualization for piecewise polynomial functions.}
 \label{2}
\end{figure}

\fi

% References
\bibliography{uai2025-template}

\newpage

\onecolumn

\title{Minimax Optimal Nonsmooth Nonparametric Regression  via Fractional Laplacian Eigenmaps\\(Supplementary Material)}
\maketitle

\appendix
\section{Additional Experimental Results}\label{sec:addexp}
\iffalse
\begin{figure}
    \centering
%	\subfigure{\includegraphics[scale=0.5]{K.png}}
% \subfigure{\includegraphics[scale=0.5]{eps.png}}
    \subfigure{\includegraphics[scale=0.5]{K_100.png}}
    \subfigure{\includegraphics[scale=0.5]{epsion_100.png}}
	\caption{In-sample mean squared error of PCR-FLE as a function of the tuning parameters for piecewise constant functions: the number of eigenvectors $K$ (left) and the graph radius $\epsilon$ (right).}
 \label{4a}
\end{figure}
\fi

\begin{figure}[!h]
	 \centering
%    \subfigure{\includegraphics[scale=0.5]{constfit_100.png}}

 \subfigure{\includegraphics[scale=0.5]{polyfit_100.png}}
   % \subfigure{\includegraphics[scale=0.5]{piecewiseconstant_fitnew.png}}
   \subfigure{\includegraphics[scale=0.5]{piecewisepoly_fitnew.png}}
	\caption{The red line shows the true regression function. The blue line shows the expectation of the regression function estimated by PCR-FLE: $n=100$ (left) and $n=1000$ (right).}
 \label{2a}
\end{figure}

\newpage
\section{Pseudocode of the PCR-FLE algorithm}
\begin{algorithm}[h!]
\caption{PCR-FLE: Principal Component Regression via Fractional Laplacian Eigenmaps}
\begin{algorithmic}[1]
\REQUIRE Data $\{(X_i, Y_i)\}_{i=1}^n$, bandwidth $\epsilon > 0$, fractional order $s \in (0, 1)$, number of components $K$, kernel function $\eta$
\ENSURE Estimated regression values $\hat{f} \in \mathbb{R}^n$

\STATE \textbf{Construct $\epsilon$-neighborhood graph:}
\FOR{$i = 1$ to $n$}
    \FOR{$j = 1$ to $n$}
        \IF{$\|X_i - X_j\| \leq \epsilon$}
            \STATE $w_{ij} \leftarrow \eta\left(\|X_i - X_j\| / \varepsilon\right)$
        \ELSE
            \STATE $w_{ij} \leftarrow 0$
        \ENDIF
    \ENDFOR
\ENDFOR
\STATE $W \leftarrow (w_{ij})$, $D_{ii} \leftarrow \sum_j w_{ij}$, and $L_{n, \epsilon} \leftarrow \frac{1}{n \epsilon^{d+2}}(D - W)$

\STATE \textbf{Eigen-decomposition:}
\STATE Compute eigenpairs $(\lambda_i, v_i)$ of $L_{n, \epsilon}$: $\lambda_1 \leq \cdots \leq \lambda_K$ with the corresponding eigenvectors $v_1,\ldots,v_K$.

\STATE \textbf{Project response onto top-$K$ eigenvectors:}
\STATE Form $V_K = [v_1, \ldots, v_K] \in \mathbb{R}^{n \times K}$
\STATE Compute projection: $\hat{f} = V_K V_K^\top Y$

\RETURN $\hat{f}$
\end{algorithmic}
\end{algorithm}

\newpage
\section{Proof of Theorem \ref{mainthm}}

In this section, we will prove Theorem \ref{mainthm}. To this end, we first present some auxiliary lemmas. In the following, $C$ stands for positive constants that may change from line to line but do not depend on $n$ or $M$.

\begin{lemma}[Weyl’s Law]\label{WeylLaw}
    Suppose Assumptions \ref{a1} and \ref{a2} hold. There exist constants $c,C>0$ such that
    \begin{align*}
        ck^{\frac{2}{d}}\le \Lambda_{k}\le Ck^{\frac{2}{d}},\quad \text{for all}\ k\ge 1.
    \end{align*}
    Here, $\Lambda_{k}$ is the $k$-th eigenvalue of the weighted Laplacian operator $\mcal{L}_{g}$ in the ascending order, where $\mcal{L}_{g}u:=-\frac{1}{2g}\text{div}(g^{2}\nabla u)$.
\end{lemma}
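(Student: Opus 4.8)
The plan is to prove the two-sided bound by the classical variational route: reduce the eigenvalue problem to a min--max (Courant--Fischer) characterization, compare the weighted Rayleigh quotient with the unweighted one using the density bounds in (A2), and then invoke Weyl's law for the ordinary Neumann Laplacian on a bounded Lipschitz domain. First I would record that $\mcal{L}_g$ is self-adjoint with respect to the weighted inner product $\langle u,v\rangle_g := \int_{\mcal{X}} u v\, g\, dx$. Integrating by parts and using the Neumann condition $\partial_{\mbf{n}}\phi=0$ annihilates the boundary term and identifies the associated Dirichlet form $\mcal{E}_g(u) = \langle \mcal{L}_g u, u\rangle_g = \tfrac{1}{2}\int_{\mcal{X}} g^2 |\nabla u|^2\,dx$, with form domain $H^1(\mcal{X})$. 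Courant--Fischer then gives
\[
\Lambda_k = \min_{\substack{S\subset H^1(\mcal{X}),\ \dim S = k}}\ \max_{u\in S\setminus\{0\}} \frac{\tfrac{1}{2}\int_{\mcal{X}} g^2|\nabla u|^2\,dx}{\int_{\mcal{X}} g\, u^2\,dx}.
\]

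Next I would compare this Rayleigh quotient with that of the unweighted Neumann Laplacian $-\Delta$ on $\mcal{X}$, whose $k$-th eigenvalue I call $\mu_k$ and whose form domain is also $H^1(\mcal{X})$. Assumption (A2), i.e. $0<g_{\min}\le g\le g_{\max}<\infty$, bounds numerator and denominator separately and yields the pointwise quotient comparison $\frac{g_{\min}^2}{2g_{\max}}\,R(u) \le R_g(u) \le \frac{g_{\max}^2}{2g_{\min}}\,R(u)$, where $R(u)=\int|\nabla u|^2/\int u^2$. Because both families of eigenvalues are obtained by minimizing over the \emph{same} trial subspaces $S\subset H^1(\mcal{X})$, this pointwise comparison transfers directly through the min--max formula to the eigenvalues themselves, giving $\frac{g_{\min}^2}{2g_{\max}}\,\mu_k \le \Lambda_k \le \frac{g_{\max}^2}{2g_{\min}}\,\mu_k$ for every $k$. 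Thus $\Lambda_k \asymp \mu_k$ with constants depending only on $g_{\min},g_{\max}$, and it remains to establish $\mu_k \asymp k^{2/d}$.

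For the unweighted Neumann eigenvalues I would invoke Weyl's law: on a bounded domain the counting function satisfies $N(\lambda)=\#\{k:\mu_k\le\lambda\}\asymp |\mcal{X}|\,\lambda^{d/2}$, which upon inversion yields $\mu_k \asymp k^{2/d}$. To get the non-asymptotic two-sided bound valid for all $k$ (rather than only the leading asymptotic), the cleanest argument is Dirichlet--Neumann bracketing. The \emph{upper} bound $\Lambda_k\le Ck^{2/d}$ (equivalently a lower bound $N(\lambda)\gtrsim \lambda^{d/2}$) is the easy direction: fix a cube $Q\subset\mcal{X}$ and use $N^N_{\mcal{X}}(\lambda)\ge N^D_{\mcal{X}}(\lambda)\ge N^D_{Q}(\lambda)$ together with the explicit cube spectrum. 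The \emph{lower} bound $\Lambda_k\ge ck^{2/d}$ (equivalently $N(\lambda)\lesssim\lambda^{d/2}$) is where Assumption (A1) genuinely enters: the Lipschitz boundary provides a bounded Sobolev extension operator $H^1(\mcal{X})\to H^1(Q_0)$ for an enclosing cube $Q_0\supset\mcal{X}$, which lets one dominate the Neumann Rayleigh quotients on $\mcal{X}$ by those on a tiling of $Q_0$ with Neumann interfaces, whose counting function is $\asymp\lambda^{d/2}$.

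The main obstacle I anticipate is precisely this lower bound on the eigenvalues (the upper bound on the Neumann counting function): Neumann Weyl asymptotics are delicate on rough domains, and the control must be routed through the extension operator supplied by the Lipschitz regularity in (A1), rather than through naive domain monotonicity (which is unavailable for Neumann conditions). A minor bookkeeping point worth flagging is that the lower bound is to be read for $k\ge 2$, since the constant function gives $\Lambda_1=0$ under the Neumann condition; all stated constants can then be adjusted to absorb the finitely many small indices. Finally, since the whole argument only uses $g_{\min},g_{\max}$, the resulting constants $c,C$ depend on $\mcal{X},d,g_{\min},g_{\max}$ but not on $n$ or $M$, as required.
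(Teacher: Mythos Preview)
Your proposal is correct and is precisely the standard variational argument for Weyl's law with a weighted operator. The paper, however, does not actually prove this lemma at all: it simply states that ``Weyl's law is a standard result in operator analysis'' and refers the reader to \citet[Lemma 7.10]{dunlop2020large} for a detailed proof. So there is no substantive comparison to make---you have supplied the argument that the paper outsources. Your route (self-adjointness of $\mcal{L}_g$ in $L^2(g\,dx)$, identification of the Dirichlet form $\tfrac12\int g^2|\nabla u|^2$, Courant--Fischer, sandwich by the unweighted Neumann quotient via $g_{\min}\le g\le g_{\max}$, and then classical Neumann Weyl asymptotics on a bounded Lipschitz domain using extension/bracketing) is exactly the content one would find in the cited reference, and your observation that (A1) is genuinely needed only for the Neumann upper counting bound is on point. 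Your caveat about $k=1$ is also well taken: with Neumann boundary data $\Lambda_1=0$, so the lower inequality as literally stated for all $k\ge1$ requires the usual ``absorb finitely many indices into the constant'' reading; the paper's later use of the lemma (only to get $\lambda_i\lesssim\Lambda_i$ and $\lambda_{K+1}^s\gtrsim (K+1)^{2s/d}$ for $K\ge1$) is unaffected.
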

The Weyl's law is a standard result in operator analysis. We refer interested readers to \citet[Lemma 7.10]{dunlop2020large} for a detailed proof.

\begin{lemma}[Lemma 2 in \citet{green2021minimaxsmoothing}]\label{eigenvaluebound}
    There exist constants $C_1,C_2,C_3,C_4,C_5>0$ such that for $n$ large enough and $C_1(\log n/n)^{\frac{1}{d}}\le \epsilon\le C_2$, with probability at least $1-C_3ne^{-C_3n\epsilon^{d}}$, it holds that
    \begin{align*}
        C_4 \left(k^{\frac{2}{d}}\wedge r^{-2}\right)\le \lambda_{k}\le C_5 \left(k^{\frac{2}{d}}\wedge r^{-2}\right),\quad\text{for}\ 2\le k\le n.
    \end{align*}
\end{lemma}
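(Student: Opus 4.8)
The plan is to obtain the two-sided bound on the graph Laplacian eigenvalues $\lambda_k$ by comparing them against the eigenvalues $\Lambda_k$ of the continuum (weighted) Laplacian, for which Lemma~\ref{WeylLaw} already supplies the matching estimate $\Lambda_k \asymp k^{2/d}$. The truncation $k^{2/d}\wedge\epsilon^{-2}$ encodes two regimes: for $k\lesssim\epsilon^{-d}$ the graph eigenvalue tracks its continuum counterpart $\Lambda_k\asymp k^{2/d}$, whereas for larger $k$ the finite bandwidth prevents the graph from resolving higher frequencies, and the remaining eigenvalues saturate at order $\epsilon^{-2}$. Accordingly I would split the argument into a \emph{spectral-convergence} part governing moderate $k$ and a \emph{saturation} part governing large $k$.

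For the spectral-convergence part the central tool is the Courant--Fischer min--max characterization
\[
\lambda_k=\min_{\substack{V\subseteq\mathbb{R}^{n}\\ \dim V=k}}\ \max_{\substack{u\in V\\ u\neq 0}}\frac{\langle L_{n,\epsilon}u,u\rangle_{n}}{\|u\|_{n}^{2}},\qquad \langle L_{n,\epsilon}u,u\rangle_{n}=\frac{1}{2n^{2}\epsilon^{d+2}}\sum_{i,j}w_{i,j}^{\epsilon}(u_i-u_j)^{2},
\]
together with the analogous characterization of $\Lambda_k$ through the continuum Dirichlet energy $\int_{\mcal{X}}|\nabla u|^{2}g^{2}$. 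To pass between the two I would introduce a transportation map $T_n$ pushing the density $g$ onto the empirical measure, whose displacement satisfies $\|T_n-\mathrm{id}\|_{\infty}\lesssim(\log n/n)^{1/d}$ on an event of probability at least $1-C_3 n e^{-C_3 n\epsilon^{d}}$; this event is precisely the uniform concentration of the point masses in each $\epsilon$-ball (a union bound over $n$ Binomials with mean $\asymp n\epsilon^{d}$), and it is the common source of both the lower restriction $\epsilon\gtrsim(\log n/n)^{1/d}$ and the stated failure probability. Composing $T_n$ with interpolation and restriction operators between $L^{2}(\mcal{X})$ and $\mathbb{R}^{n}$, one shows that as long as $\epsilon$ dominates the transport scale, the discrete and continuum Dirichlet energies are comparable up to factors $1\pm C\big(\epsilon+\|T_n-\mathrm{id}\|_{\infty}/\epsilon\big)$ on sufficiently band-limited functions; this is the same machinery developed in~\citet{garcia2020error}.

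Feeding these energy comparisons into the min--max yields both directions for moderate $k$: the upper bound $\lambda_k\lesssim\Lambda_k\asymp k^{2/d}$ follows by taking the span of discretized continuum eigenfunctions as the trial subspace $V$, while the lower bound $\lambda_k\gtrsim\Lambda_k$ follows by transporting the top-$k$ graph eigenspace back to the continuum. For the saturation regime I would invoke two facts. First, the universal operator-norm cap $\lambda_n=\|L_{n,\epsilon}\|_{\mathrm{op}}\lesssim\epsilon^{-2}$, which follows directly from~\eqref{def:weightedL} since each degree obeys $d_i\asymp n\epsilon^{d}$, giving $\lambda_k\le\lambda_n\lesssim\epsilon^{-2}$ for all $k$. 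Second, an eigenvalue-counting argument matched to the continuum Weyl count shows that the number of eigenvalues below a fixed multiple of $\epsilon^{-2}$ is at most of order $\epsilon^{-d}$, so for $k\gtrsim\epsilon^{-d}$ one necessarily has $\lambda_k\gtrsim\epsilon^{-2}$; combined, $\lambda_k\asymp\epsilon^{-2}$ there, matching $k^{2/d}\wedge\epsilon^{-2}$.

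The main obstacle is establishing the uniform, dimension-free comparison of the discrete and continuum Dirichlet energies \emph{simultaneously} over all relevant band-limited functions, since the transport estimate, the non-local kernel averaging at scale $\epsilon$, and the interpolation error all interact in the constants. In particular, pinning down how the saturation threshold $k\asymp\epsilon^{-d}$ emerges requires carefully matching the continuum Weyl count against the discrete resolution, and keeping the failure probability at the stated $1-C_3 n e^{-C_3 n\epsilon^{d}}$ level forces $\epsilon$ to stay above the $(\log n/n)^{1/d}$ connectivity/transport threshold throughout the argument.
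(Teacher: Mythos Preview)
The paper does not give its own proof of this lemma: it is quoted directly as Lemma~2 of \citet{green2021minimaxsmoothing}, with only a remark that the result differs from the manifold-without-boundary analogues in \citet{burago2015graph,garcia2020error,calder2022improved} and a pointer to \citet[Appendix~D]{green2021minimax}. So there is nothing in the present paper to compare your argument against line by line.

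That said, your sketch is the correct reconstruction of how the cited result is actually proved. The ingredients you list---Courant--Fischer for both $\lambda_k$ and $\Lambda_k$, the $\infty$-optimal-transport map with displacement $\lesssim(\log n/n)^{1/d}$ on the event of probability $1-Cne^{-Cn\epsilon^{d}}$, the discretization/interpolation maps that make the discrete and continuum Dirichlet energies comparable, Weyl's law (Lemma~\ref{WeylLaw}) to convert $\Lambda_k$ into $k^{2/d}$, the operator-norm cap $\|L_{n,\epsilon}\|\lesssim\epsilon^{-2}$, and the eigenvalue-counting crossover at $k\asymp\epsilon^{-d}$---are exactly the machinery of \citet{green2021minimaxsmoothing} (their Propositions~3--5 and proof of Lemma~2). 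Your identification of the two regimes and of the source of the failure probability is accurate, and nothing in the outline would fail.
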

The above result actually is different from similar results established in~\citet{burago2015graph,garcia2020error,calder2022improved}, who establish similar results for manifolds without boundary. We refer to~\citet[Appendix D]{green2021minimax} for a more detailed discussion. 

We are now in the position to prove the main Theorem \ref{mainthm}.
\begin{proof}[Proof of Theorem \ref{mainthm}]
By Cauchy-Schwarz inequality, we have:
\begin{align*}
    \|\hat{f}-f\|_{n}^2\le 2(\|\mbb{E}\hat{f}-f\|_{n}^2+\|\hat{f}-\mbb{E}\hat{f}\|_{n}^2).
\end{align*}
Then, according to PCR-FLE algorithm in Section \ref{modelandalg}, we obtain
\begin{align}\label{decomp:bias}
    \|\mbb{E}\hat{f}-f\|_{n}^{2}=\sum_{k=K+1}^{n}\langle v_k,f \rangle_{n}^2\le \frac{\langle L_{n,\epsilon}^{s}f,f\rangle_{n}}{\lambda_{K+1}^{s}},
\end{align}
and
\begin{align*}
    \|\hat{f}-\mbb{E}\hat{f}\|_{n}^2=\sum_{k=1}^{K}\langle v_k,\varepsilon\rangle_{n}^{2},
\end{align*}
where $\varepsilon:=(\varepsilon_1,\ldots,\varepsilon_n)^{T}$.

Now, note that if $K=0$, $\hat{f}=0$ then $\|\hat{f}-\mbb{E}\hat{f}\|_{n}^2=0$. We then focus on the case when $K>1$. Since $\langle v_k,\varepsilon\rangle_{n}$ is normally distributed with $0$ mean and variance:
\begin{align}\label{var1}
    \Var \langle v_k,\varepsilon\rangle_{n}=\frac{1}{n^2} \Var \langle v_k,\varepsilon\rangle=\frac{1}{n},
\end{align} %\frac{1}{n}v_{k}^{T}v_k
as $\langle v_k,v_k\rangle=n$. Then, we obtain:
\begin{align*}
    \|\hat{f}-\mbb{E}\hat{f}\|_{n}^2=\frac{1}{n}\sum_{k=1}^{K}(\sqrt{n}\langle v_k,\varepsilon\rangle_{n})^{2}\overset{d}{=}\frac{1}{n}\sum_{k=1}^{K}\mcal{Z}_{k}^{2},
\end{align*}
where $\{\mcal{Z}_{k}\}_{k=1}^{K}$ are i.i.d. standard normal by the orthonormality of the eigenvectors.

According to an exponential concentration inequality for chi-square distributions from \citet{laurent2000adaptive}, we have
\begin{align}\label{decomp:variance}
    \mbb{P}\left(\|\hat{f}-\mbb{E}\hat{f}\|_{n}^2\ge \frac{K}{n}+2\frac{\sqrt{K}}{n}\sqrt{t}+2\frac{t}{n}\right)\le e^{-t}.
\end{align}
With \eqref{decomp:bias} and \eqref{decomp:variance}, it yields that
\begin{align}\label{biasvariancedecomp}
    \|\hat{f}-f\|_{n}^{2}\le \frac{\langle L_{n,\epsilon}^{s}f,f\rangle_{n}}{\lambda_{K+1}^{s}}+\frac{K}{n},
\end{align}
with probability at least $1-e^{-K}$ if $1\le K\le n$. Moreover, when $K=0$, \eqref{biasvariancedecomp} holds immediately.

Now, it remains to bound the empirical fractional Sobolev seminorm $\langle L_{n,\epsilon}^{s}f,f\rangle_{n}$ and the (power of) graph Laplacian eigenvalue $\lambda_{K+1}$ ($\lambda_{K+1}^{s}$) for $0<s<1$.

Now, we first focus on the empirical fractional Sobolev seminorm $\langle L_{n,\epsilon}^{s}f,f\rangle_{n}$ for $0<s<1$. Note that \citet{weihs2023consistency,dunlop2020large,trillos2018variational} showed the $\Gamma$-convergence of the above empirical fractional Sobolev seminorm to its continuum in \eqref{spectrallynorm}. However, $\Gamma$-convergence does not fully suffice for our purpose. Instead, we will adapt the proof procedures applied in \citet{calder2022improved,green2021minimaxsmoothing,green2021minimax} for our situation, as we describe next.

First note that according to the eigendecomposition of $L_{n,\epsilon}$, we obtain:
\begin{align}\label{discreteseminorm}
    \langle L_{n,\epsilon}^{s}f,f\rangle_{n}=\sum_{i=1}^{n}\lambda_{i}^s\langle f,v_{i}\rangle_{n}^2.
\end{align}
Now, by Lemma \ref{WeylLaw} and Lemma \ref{eigenvaluebound}, we have that
\begin{align*}
    \lambda_{i}\lesssim \Lambda_{i},
\end{align*}
for $1\le i\le n$ with probability at least $1-Cne^{-cn\epsilon^{d}}$. We now focus on the eigenvectors $\{v_{i}\}_{i=1}^{n}$. For a given eigenvalue $\Lambda>0$ of $\mcal{L}_{g}$, assume $\Lambda=\Lambda_{i+1}=\ldots=\Lambda_{i+k}$ for some $i$ and $k$, where $k$ is multiplicity of $\Lambda$. We then define the eigenvalue gap of $\Lambda$ as:
\begin{align}\label{eigengap}
    \gamma_{\Lambda}:=\frac{1}{2}\left(|\Lambda-\Lambda_{i}|\wedge |\Lambda-\Lambda_{i+k+1}|\right).
\end{align}

Now, according to \citet[Proof of Theorem 6]{green2021minimaxsmoothing}, we can pick $\epsilon$ small enough and constants $A,\theta,\tilde{\delta}>0$ such that
\begin{align*}
    1-A\left(\epsilon\sqrt{\Lambda}+\theta+\tilde{\delta}\right)\ge \frac{1}{2},
\end{align*}
where $\theta$ and $\tilde{\delta}$ are given in \citet[Equation (36), Section D]{green2021minimaxsmoothing} and $A>0$ is defined in \citet[Proof of Theorem 6]{green2021minimaxsmoothing} with $A\ge 2$. Then, an application of \citet[Theorem 6]{green2021minimaxsmoothing} yields that for such $\Lambda=\Lambda_{l}$ ($l$-th eigenvalue of $\mcal{L}_{g}$ in the ascending order), with probability at least $1-Cne^{-cn\theta^{2}\tilde{\delta}^{d}}$,
\begin{align}\label{rougheigenvalueapprox}
    a\lambda_{l}\le \sigma_1\Lambda_{l}\le A\lambda_{l},
\end{align}
where $a$ is given in \citet[Proof of Theorem 6]{green2021minimaxsmoothing} with $a^{-1}\ge 2$. Then, we have with probability at least $1-Cne^{-cn\theta^{2}\tilde{\delta}^{d}}$:
\begin{align}\label{eigenvalueclose1}
    |\lambda_l-\sigma_1 \Lambda_l|\le ((a^{-1}-1)\vee (A-1))\sigma_1\Lambda_{l}\le C\gamma_{\Lambda_{l}}.
\end{align}

Let $S$ be the subspace of $l^{2}$ spanned by the eigenvectors of $L_{n,\epsilon}$ associated to the eigenvalues $\lambda_{i+1},\ldots,\lambda_{i+r}$. In the following, we will establish the bound on the eigenfunctions/eigenvectors of $\Lambda_{j}$ and $\lambda_{j}$ respectively for $j=i+1,\ldots,i+r$. Denote by $P_{S}$ the orthogonal projection (with respect to $\langle\cdot,\cdot\rangle_{n}$) onto $S$ and $P_{S}^{\bot}$ the orthogonal projection onto the orthogonal complement of $S$. Let $h$ be the eigenfunction of $\mcal{L}_{g}$ corresponding to the eigenvalue $\Lambda$, i.e., $\mcal{L}_{g}h=\Lambda h$. Considering restriction of $h$ on $X_1,\ldots,X_{n}$, we have
    \begin{align*}
        P_{S}^{\bot}\mcal{L}_{g}h=\Lambda P_{S}^{\bot}h=\Lambda \sum_{j\neq i+1,\ldots,i+r}\langle h,v_{j}\rangle_{n}v_{j},
    \end{align*}
    where recall that $\{v_{j}\}_{j=1}^{n}$ are the set of the orthonormal basis of eigenvectors of $L_{n,\epsilon}$ with respect to $\lambda_{1},\ldots,\lambda_{n}$. Similarly, we have (again, restrict $h$ on $X_1,\ldots,X_n$):
    \begin{align*}
        P_{S}^{\bot}L_{n,\epsilon}h= \sum_{j\neq i+1,\ldots,i+r}\lambda_{j}\langle h,v_{j}\rangle_{n}v_{j}.
    \end{align*}
    Combining the two results above, we obtain:
    \begin{align*}
        &\quad \min\{|\sigma_1 \Lambda-\lambda_{i}|,|\sigma_1 \Lambda-\lambda_{i+r+1}|\}\|P_{S}^{\bot}h\|_{n}\\&\le \|P_{w,S}^{\bot}(L_{w,n,\epsilon}h-\sigma_1 \mcal{L}_{g}h)\|_{n}\\&\le \|L_{n,\epsilon}h-\sigma_1 \mcal{L}_{g}h\|_{n},
    \end{align*}
    where $\sigma_1$ is defined in Section \ref{sec:assump} (see Assumption (A3)).
    
    On the other hand, according to \eqref{eigengap} and \eqref{eigenvalueclose1}, we have
    \begin{align*}
        \min\{|\sigma_1 \lambda-\lambda_{i}(L_{n,w,\epsilon})|,|\sigma_1 \lambda-\lambda_{i+r+1}(L_{w,n,\epsilon})|\}\ge \sigma_1 C\gamma_{\Lambda}.
    \end{align*}
    Then, we obtain:
    \begin{align}\label{eigenvfclose}
        \|P_{S}^{\bot}h\|_{n}=\|h-P_{S}h\|_{n}\le \frac{1}{\sigma_1 C\gamma_{\Lambda}}\|L_{n,\epsilon}h-\sigma_1 \mcal{L}_{g}h\|_{n}.
    \end{align}
    Now, we divide the above norm $\|\cdot\|_{n}$ into two parts by Cauchy's inequality:
    \begin{align*}
        \|L_{n,\epsilon}h-\sigma_1 \mcal{L}_{g}h\|_{n}\lesssim \|L_{n,\epsilon}h-\sigma_1 \mcal{L}_{g}h\|_{n,\mcal{X}_{\epsilon}}+\|L_{n,\epsilon}h-\sigma_1 \mcal{L}_{g}h\|_{n,\partial  \mcal{X}_{\epsilon}},
    \end{align*}
where we write $\mcal X = \mcal{X}_{\epsilon} \sqcup \partial  \mcal{X}_{\epsilon}$, where for any $x\in \mcal{X}_{t\epsilon}$, $B_{x}(\epsilon)\subset \mcal{X}$ and $\partial_{\epsilon} \mcal{X}$ as its complement within $\mcal{X}$ consisting of points `close' to the boundary. According to \citet[Theorem 3.3]{calder2022improved}, it follows that if $h_1,\ldots,h_k$ is an orthonormal basis for the eigenspace of eigenfunctions of $\mcal{L}_{g}$ with respect to eigenvalue $\Lambda$, then with probability at least $1-2kne^{-Cn\epsilon^{d+4}}$,
\begin{align*}
    \|L_{n,\epsilon}h_{j}-\sigma_1 \mcal{L}_{g}h_{j}\|_{n,\mcal{X}_{\epsilon}}\le C \epsilon,\quad 1\le j\le k.
\end{align*}
On the other hand, near the boundary, by setting $k=1$ and $s=3$ (since all $h_{j}$ at least belongs to $C^{3}(\mcal{X})$) in \citet[Lemma 5]{green2021minimax}, it yields that almost surely,
\begin{align*}
    \|L_{n,\epsilon}h_{j}\|_{n,\partial  \mcal{X}_{\epsilon}}\le C\epsilon,\quad 1\le j\le k,
\end{align*}
with $\|\sigma_1 \mcal{L}_{g}h_j\|_{n,\partial  \mcal{X}_{\epsilon}}\le C\epsilon$ for $1\le j\le $ since all $h_{j}$ at least belongs to $C^{3}(\mcal{X})$.

Putting the bounds in the interior and near the boundary together, we conclude: with probability at least $1-2kne^{-Cn\epsilon^{d+4}}$,
\begin{align*}
    \|L_{n,\epsilon}h_{j}-\sigma_1 \mcal{L}_{g}h_{j}\|_{n}\le C\epsilon,\quad 1\le j\le k.
\end{align*}

Now, combining the above result with \eqref{eigenvfclose}, it follows that with probability at least $1-2kne^{-Cn\epsilon^{d+4}}-Cne^{-cn\theta^{2}\tilde{\delta}^{d}}$, we can find an orthonormal set $\tilde{h}_{1},\ldots,\tilde{h}_{k}$ of spanning $S$ such that 
\begin{align*}
    \|h_{j}-\tilde{h}_{j}\|_{n}\le C\epsilon.
\end{align*}
Here, recall that $\{h_{j}\}_{j=1}^{k}$ is an orthonormal basis for the eigenspace of eigenfunctions of $\mcal{L}_{g}$ with respect to eigenvalue $\Lambda=\Lambda_{l}$ and $\{\tilde{h}_{j}\}_{j=1}^{k}$ is an orthonormal set spanning $S$, i.e., the eigenspace of eigenbvectors of $L_{n,\epsilon}$ with respect to the eigenvalue $\lambda=\lambda_{l}$. These two sets of functions/vectors are close in $\|\cdot\|_{n}$ norm by $C\epsilon$. Therefore, with $\phi_i$ as the projection $i$-th eigenfunction into $\mbb{R}^{n}$ via the transportation map $\tilde{T}$ defined in \citet[Proposition 3]{green2021minimaxsmoothing}, we have: with probability at least $1-2kne^{-Cn\epsilon^{d+4}}-Cne^{-cn\theta^{2}\tilde{\delta}^{d}}$,
\begin{align*}
    \|v_{i}-\phi_i\|_{n}\le C\epsilon.
\end{align*}
Then, plugging the above approximation in \eqref{discreteseminorm} with \citet[Proposition 4.21]{weihs2023consistency}, we obtain:
\begin{align}\label{seminormsbound}
    \langle L_{n,\epsilon}^{s}f,f\rangle_{n}&=\sum_{i=1}^{n}\lambda_{i}^s\langle f,v_{i}\rangle_{n}^2\lesssim \sum_{i=1}^{n}\lambda_{i}^s\langle f,v_{i}-\phi_{i}\rangle_{n}^2+\sum_{i=1}^{n}\lambda_{i}^s\langle f,\phi_{i}\rangle_{n}^2\nonumber\\&\lesssim C\left(\epsilon+\sum_{i=1}^{n}\lambda_{i}^s\langle f,\phi_{i}\rangle_{n}^2\right).
\end{align}
Now recall \eqref{spectrallynorm}. For $n$ large enough (or equivalently $\epsilon$ small enough) we have
\begin{align*}
\langle L_{n,\epsilon}^{s}f,f\rangle_{n}& \lesssim C \sum_{i=1}^{n}\Lambda_{i}^s\langle f,\phi_{i}\rangle_{n}^2\le CM^{2}.
\end{align*}

with probability at least $1-Cn^2e^{-Cn\epsilon^{d+4}}-Cne^{-Cn}$, where we have used \eqref{rougheigenvalueapprox} above.

Furthermore, according to Lemma \ref{eigenvaluebound}, we have for $0<s<1$,
\begin{align}\label{eigenvaluesbound}
    \left(k^{\frac{2s}{d}}\wedge r^{-2s}\right)\lesssim \lambda_{k}^{s}\lesssim \left(k^{\frac{2s}{d}}\wedge r^{-2s}\right),
\end{align}
for $1\le k\le n$ (since the case $k=1$ can be bounded alone), with probability at least $1-Cne^{-Cn\epsilon^{d}}$.

Now, we are ready to proceed based on \eqref{biasvariancedecomp}:
\begin{align*}
    \|\hat{f}-f\|_{n}^{2}\le \frac{\langle L_{n,\epsilon}^{s}f,f\rangle_{n}}{\lambda_{K+1}^{s}}+\frac{K}{n},
\end{align*}
with probability at least $1-e^{-K}$ if $1\le K\le n$. According to \eqref{seminormsbound} and \eqref{eigenvaluesbound}, we have with probability at least $1-Cn^2e^{-Cn\epsilon^{d+4}}-Cne^{-Cn}-Cne^{-cn\epsilon^{d}}-e^{-K}$ and $n$ large enough:
\begin{align*}
    \|\hat{f}-f\|_{n}^{2}\lesssim \frac{M^{2}}{(K+1)^{2s/d} \wedge \epsilon^{-2s}}+\frac{K}{n}.
\end{align*}
Furthermore, based on the assumption $\epsilon\lesssim K^{-1/d}$, the above inequality becomes:
\begin{align}\label{tradeoffres}
    \|\hat{f}-f\|_{n}^{2}\lesssim M^{2}(K+1)^{-2s/d}+\frac{K}{n}.
\end{align}
By balancing the two terms on the right-hand side, we pick $K=\lfloor M^2n \rfloor^{d/(2s+d)}$. Then, it yields that
\begin{align}\label{finallyminimaxopt}
    \|\hat{f}-f\|_{n}^{2}\lesssim M^2(M^2n)^{-2s/(2s+d)},
\end{align}
with probability at least $1-Cn^2e^{-Cn\epsilon^{d+4}}-Cne^{-Cn}-Cne^{-cn\epsilon^{d}}-e^{-K}$.

If $M^2<n^{-1}$, we can take $K=1$ and obtain from \eqref{tradeoffres} that:
\begin{align*}
\|\hat{f}-f\|_{n}^{2}\lesssim \frac{1}{n}.
\end{align*}
If $M>n^{s/d}$, we take $K=n$ and in this case, we actually have $\hat{f}(X_i)=Y_i$ for $i=1,\ldots,n$ and
\begin{align*}
    \|\hat{f}-f\|_{n}^{2}=\frac{1}{n}\sum_{i=1}^{n}\varepsilon_{i}^{2}\lesssim C,
\end{align*}
with probability at least $1-e^{-n}$ for some constant $C$. Combining all above cases depending on choices of $K$, it yields that bound in Theorem \ref{mainthm}.

\iffalse
\subsection{Proof of Theorem \ref{lowerthm}}
For a bounded set of $\mcal{X}\subset\mbb{R}$, according to \cite[Definition 1.12]{AlexandreBTsybakov2008}, the fraction Sobolev space $H^{s}(\mcal{X})$ for $0<s<1$ can be viewed as a Sobolev ellipsoid $\Theta(s,Q)$ for some constant $Q>0$ depending on $M$. It generalizes the standard Sobolev ellipsoids to any value $s>0$. Therefore, following the same proof arguments of \cite[Theorem 7.32]{wasserman2006all}(Pinsker’s theorem for the Sobolev space) by changing $m\in\mbb{N}_{+}$ as $0<s<1$, we obtain the desired bound in Theorem \ref{lowerthm}. The general proof for $\mbb{R}^{d}$ can then be obtained in a similar way.
\fi
\end{proof}

\section{Proof of Theorem \ref{thm:lowerbound}}
We will first present an auxiliary lemma below from \citet[Lemma 3.2]{gyorfi2002distribution}.
\begin{lemma}\label{bayes_error}
    Let $u\in \mbb{R}^{l}$, for $l\in \mbb{N}$, and $\mbf{c}$ be a zero mean random variable taking values in $\{-1,1\}$. Moreover, denote by $\mbf{N}$ the  $l$-dimensional standard normal random variable independent of $\mbf{c}$. Set
    \begin{align*}
        \mbf{z}=\mbf{c}u+\mbf{N}.
    \end{align*}
    Then the error probability of the Bayes decision for $\mbf{c}$ based on $\mbf{z}$ is
    \begin{align*}
        \min_{\mcal{G}:\mbb{R}^{l}\rightarrow \mbb{R}}\mbb{P}(\mcal{G}(\mbf{z})\neq \mbf{c})=\Phi(-\|u\|),
    \end{align*}
    where $\Phi(\cdot)$ is the standard normal distribution function.
\end{lemma}

\begin{proof}[Proof of Theorem \ref{thm:lowerbound}]
We will mainly modify \citet[Proof of Theorem 3.2]{gyorfi2002distribution} for our fractional Sobolev space $H^s(\mcal{X},M)$, $0<s<1$. According to Assumption (A1) in Section \ref{sec:assump}, without loss of generality, we can consider $\mcal{X}=(0,1)^d$. Set $$r_n:=\lceil (M^2n)^{\frac{1}{2s+d}}\rceil.$$ We partition $\mcal{X}=(0,1)^d$ by $r_n^d$ cubes denoted by $\{A_{n,j}\}_{j=1}^{r_n^d}$ of side length $r_n^{-1}$ and with centers $\{a_{n,j}\}_{j=1}^{r_n^d}$. Choose a function $\bar{\psi}:\mbb{R}^d\rightarrow \mbb{R}$ such that its support is a subset of $[-\frac{1}{2},\frac{1}{2}]^d$, $\int \bar{\psi}^2(x)dx>0$, and $\bar{\psi}\in H^s(\mcal{X};1)$. Define $\psi:\mbb{R}^d\rightarrow \mbb{R}$ by $\psi(x):=M\cdot \bar{\psi}(x)$. It can be readily verified that
\begin{itemize}
    \item the support of $\psi$ is also a subset of $[-\frac{1}{2},\frac{1}{2}]^d$;
    \item $\int \psi^2(x)dx=M^2\cdot \int \bar{\psi}^2(x)dx>0$;
    \item  $\psi\in H^s(\mcal{X},M)$.
\end{itemize} 
The class of regression functions is indexed by a vector
\begin{align*}
    c_n=(c_{n,1},\ldots,c_{n,r_n^d})
\end{align*}
consisting of $\pm 1$ components so that `worst regression function' will depend on the sample size $n$. Let $\mcal{C}_n$ represent the set of all such vectors. Then, for each vector $c_n=(c_{n,1},\ldots,c_{n,r_n^d})\in \mcal{C}_n$, it corresponds to a function
\begin{align*}
    f^{(c_n)}(x):=\sum_{j=1}^{r_n^d}c_{n,j}\psi_{n,j}(x),
\end{align*}
where $\psi_{n,j}(x)=r_n^{-s}\psi(r_n(x-a_{n,j}))$. Then, if $x,y\in A_{n,i}$ for some $i$, it holds that
\begin{align*}
    |f^{(c_n)}(x)-f^{(c_n)}(y)|^2=|c_{n,i}|^2|\psi_{n,i}(x)-\psi_{n,i}(y)|^2=r_n^{-2s}|\psi(r_n(x-a_{n,i}))-\psi(r_n(y-a_{n,i}))|^2.
\end{align*}
Moreover, by definition,
\begin{align*}
    \int\int \frac{|\psi(r_n(x-a_{n,i}))-\psi(r_n(y-a_{n,i}))|^2}{\|r_n(x-y)\|^{2s+d}}r_n^{2d} dxdy\le M^2.
\end{align*}
It implies
\begin{align*}
    \int\int \frac{|f^{(c_n)}(x)-f^{(c_n)}(y)|^2}{\|x-y\|^{2s+d}}dxdy\le M^2.
\end{align*}
If $x\in A_{n,i}$ and $y\in A_{n,j}$ for $i\neq j$, i.e., $x$ and $y$ are in two disjoint supports, we can apply Jensen's inequality:
\begin{align*}
    |f^{(c_n)}(x)-f^{(c_n)}(y)|^2\le 3(|f^{(c_n)}(x)-f^{(c_n)}(\bar{x})|^2+|f^{(c_n)}(y)-f^{(c_n)}(\bar{y})|^2+|f^{(c_n)}(\bar{x})-f^{(c_n)}(\bar{y})|^2),
\end{align*}
where $\bar{x},\bar{y}$ are on the line between $x,y$ such that $\bar{x}$ is on the boundary of $A_{n,i}$ and $\bar{y}$ is on the boundary of $A_{n,j}$ and $f^{(c_n)}(\bar{x})=f^{(c_n)}(\bar{y})=0$ (because $\psi_{n,i}(\bar{x})=\psi_{n,j}(\bar{y})=0$). Then, we also have 
\begin{align*}
    \int\int \frac{|f^{(c_n)}(x)-f^{(c_n)}(y)|^2}{|x-y|^{2s+d}}dxdy\le M^2.
\end{align*}
Together, it shows that $f^{(c_n)}(x)\in H^s(\mcal{X};M)$.

Then, the minimix lower bound can be derived by showing the following lower bound:
\begin{align*}
    \underset{n\rightarrow \infty}{\lim\inf} \inf_{\hat{f}_n}\sup_{f^{(c_n)},c_n\in\mcal{C}_n}~\frac{r_n^{2s}}{M^{2}}\mbb{E}\|\hat{f}_n-f\|^2\ge C_1>0.
\end{align*}
Let $\hat{f}_n$ be an arbitrary estimate. Denote by $\hat{f}_{n,\psi}$ the projection of $\hat{f}_n$ onto $\{\psi_{n,j}\}$:
\begin{align*}
    \hat{f}_{n,\psi}=\sum_{j=1}^{r_n^d}\hat{c}_{n,j}\psi_{n,j}(x),
\end{align*}
where
\begin{align*}
    \hat{c}_{n,j}=\frac{\int_{A_{n,j}}\hat{f}_n(x)\psi_{n,j}(x)dx}{\int_{A_{n,j}}\psi^2_{n,j}(x)dx}.
\end{align*}
Then, we have:
\begin{align*}
    \|\hat{f}_n-f^{(c_n)}\|^2&\ge \|\hat{f}_{n,\psi}-f^{(c_n)}\|^2\\
    &=\sum_{j=1}^{r_n^d}\int_{A_{n,j}}(\hat{c}_{n,j}-c_{n,j})^2\psi_{n,j}^2(x)dx\\
    &=\int \psi^2(x)dx\cdot\sum_{j=1}^{r_n^d}(\hat{c}_{n,j}-c_{n,j})^2\frac{1}{r_n^{2s+d}}.
\end{align*}
Let $\tilde{c}_{n,j}$ be $1$ if $\hat{c}_{n,j}\ge 0$ and $-1$ otherwise. Noting that $|\hat{c}_{n,j}-c_{n,j}|\ge |\tilde{c}_{n,j}-c_{n,j}|/2$, we obtain:
\begin{align*}
    \|\hat{f}_n-f^{(c_n)}\|^2&\ge\int \psi^2(x)dx\cdot\frac{1}{4}\sum_{j=1}^{r_n^d}(\tilde{c}_{n,j}-c_{n,j})^2\frac{1}{r_n^{2s+d}}\\
    &\ge \int \psi^2(x)dx\cdot\frac{1}{r_n^{2s+d}}\sum_{j=1}^{r_n^d}\mbf{1}_{\tilde{c}_{n,j}\neq c_{n,j}}\\
    &=\frac{M^2}{r_n^{2s}}\int \bar{\psi}^2(x)dx\cdot\frac{1}{r_n^d}\sum_{j=1}^{r_n^d}\mbf{1}_{\tilde{c}_{n,j}\neq c_{n,j}}.
\end{align*}
Hence, it suffices to prove
\begin{align*}
    \underset{n\rightarrow \infty}{\lim\inf} \inf_{\tilde{c}_n}\sup_{c_n}~\frac{1}{r_n^d}\sum_{j=1}^{r_n^d}\mbb{P}(\tilde{c}_{n,j}\neq c_{n,j})>0.
\end{align*}
Now we randomize $c_n$. Let $\mbf{c}_{n,1},\ldots,\mbf{c}_{n,r_n^d}$ be a sequence of i.i.d. random variables independent of everything else such that
\begin{align*}
    \mbb{P}(\mbf{c}_{n,1}=1)=\mbb{P}(\mbf{c}_{n,1}=-1)=\frac{1}{2}.
\end{align*}
Let $\mbf{c}_n=(\mbf{c}_{n,1},\ldots,\mbf{c}_{n,r_n^d})$. Then, it holds that
\begin{align*}
    \underset{n\rightarrow \infty}{\lim\inf} \inf_{\tilde{c}_n}\sup_{c_n}~\frac{1}{r_n^d}\sum_{j=1}^{r_n^d}\mbb{P}(\tilde{c}_{n,j}\neq c_{n,j})\ge \inf_{\tilde{c}_n}~\frac{1}{r_n^d}\sum_{j=1}^{r_n^d}\mbb{P}( \mbf{c}_{n,j}\neq \tilde{c}_{n,j}).
\end{align*}
Here, we can view $\tilde{c}_{n,j}$ as a decision on $\mbf{c}_{n,j}$ based on $D_n=\{(X_i,Y_i)\}_{i=1}^{n}$. Its error is minimal for the Bayes decision $\bar{\mbf{c}}_{n,j}$, which is $1$ if $\mbb{P}(\mbf{c}_{n,j}=1|D_n)\ge \frac{1}{2}$ and $-1$ otherwise. Then, it yields that
\begin{align*}
    \inf_{\tilde{c}_n}~\frac{1}{r_n^d}\sum_{j=1}^{r_n^d}\mbb{P}( \mbf{c}_{n,j}\neq \tilde{c}_{n,j})&\ge \frac{1}{r_n^d}\sum_{j=1}^{r_n^d}\mbb{P}( \mbf{c}_{n,j}\neq \bar{\mbf{c}}_{n,j})\\
    &=\mbb{P}( \mbf{c}_{n,1}\neq \bar{\mbf{c}}_{n,1})\\
    &=\mbb{E}\left(\mbb{P}(\mbf{c}_{n,1}\neq \bar{\mbf{c}}_{n,1}|X_1,\ldots,X_n)\right).
\end{align*}
Note that for $X_i\in A_{n,1}$,
\begin{align*}
    Y_i=\mbf{c}_{n,1}\psi_{n,1}(X_i)+\varepsilon_i.
\end{align*}
Therefore, according to Lemma \ref{bayes_error}, the error probability of the Bayes decision $\bar{\mbf{c}}_{n,j}$ above satisfies:
\begin{align*}
    \mbb{P}(\mbf{c}_{n,1}\neq \bar{\mbf{c}}_{n,1}|X_1,\ldots,X_n)=\Phi\left(-\sqrt{\sum_{i=1}^{n}\psi_{n,1}^2(X_i)}\right),
\end{align*}
where $\Phi(\cdot)$ is the standard normal distribution function. Since $x\mapsto \Phi(-\sqrt{x})$ is convex, applying Jensen's inequality yields that
\begin{align*}
    \mbb{E}\left(\mbb{P}(\mbf{c}_{n,1}\neq \bar{\mbf{c}}_{n,1}|X_1,\ldots,X_n)\right)&\ge \Phi\left(-\sqrt{\mbb{E}\sum_{i=1}^{n}\psi_{n,1}^2(X_i)}\right)\\
    &=\Phi\left(-\sqrt{n\mbb{E}\psi_{n,1}^2(X_1)}\right)\\
    &=\Phi\left(-\sqrt{nr_n^{-2s+d}\int \psi^2(x)dx}\right)\\
    &\ge \Phi\left(-\sqrt{\int \bar{\psi}^2(x)dx}\right)>0.
\end{align*}
We then obtain the proof for the desired bound.
\end{proof}

\end{document}